\numberwithin{equation}{section}
\theoremstyle{definition}
\newtheorem{Def}{Definition}[section]
\newtheorem{Eg}{Example}[section]
\theoremstyle{plain}
\newtheorem{Lem}[Def]{Lemma}
\newtheorem{Thm}[Def]{Theorem}
\theoremstyle{definition}
\newcommand{\p}{\mathbb{P}}
\newcommand{\e}{\mathbb{E}}
\newcommand{\real}{\mathbb{R}}
\newcommand{\n}{\mathbb{N}}
\newcommand{\1}{{\bf 1}}
\newcommand{\mf}{\mathcal{F}}
\newcommand{\rd}{\mathrm{d}}
\newcommand{\eps}{\varepsilon}
\newcommand{\nt}{\widetilde{N}}
\newcommand{\pde}{\psi_{\delta,\varepsilon}}
\newcommand{\urde}{u_{r,\delta,\varepsilon}}
\begin{document}

\title{
	Existence of density functions for the Running Maximum of SDEs by non-truncated L\'evy processes.
}
\author{
	Takuya Nakagawa
	\footnote{
		Department of Mathematical Sciences,
		Ritsumeikan University,
		1-1-1 Nojihigashi, 
		Kusatsu, Shiga, 525-8577, 
		Japan. 
		Email: \texttt{takuya.nakagawa73@gmail.com}
	}
 \footnote{Corresponding Author}
 \quad and \quad
	Ryoichi Suzuki
	\footnote{
		Department of Mathematical Sciences,
		Ritsumeikan University,
		1-1-1 Nojihigashi, 
		Kusatsu, Shiga, 525-8577, 
		Japan. 
		Email: \texttt{rsuzukimath@gmail.com} 
  ORCiD: 0000-0001-9979-1882
	}
}
\maketitle
\begin{abstract}
We verify the existence of density functions of the running maximum of a stochastic differential equation (SDE) driven by a Brownian motion and a non-truncated stable process.
This is proved by the existence of density functions of the running maximum of Wiener-Poisson functionals resulting from Bismut’s approach to Malliavin calculus for jump processes. \\
\textbf{Keywords}:
Running maximum, Density functions, Malliavin calculus, Stochastic differential equation, L\'{e}vy processes.\\
\textbf{2020 Mathematics Subject Classification}: 60H10; 60G52; 60H07\\
\end{abstract}

\section{Introduction}
We consider a solution of the following one-dimensional SDE
\begin{align}\label{SDE}
    \rd X_t=b(X_t) \rd t + \sigma_1 \rd W_t + \sigma_2 \rd L_t,\ X_0=x\in\real
\end{align}
for $t\ge 0$, where $\sigma_1$ and $\sigma_2$ are constants, $b:\real\to\real$ is once differentiable and its derivative is bounded, $W=\{W_t\}_{t\in[0,T]}$ is a standard Brownian motion and $L=\{L_t\}_{t\in[0,T]}$ is a L\'evy process with l\'evy triplet $(0,0,\nu)$.
The infinitesimal generator $A$ of $L$ is defined by
\begin{equation*}
A f(x):= \int_{\real \setminus \{0\}} \left\{f(x+y)-f(x)-1_{\{|y| \le 1 \}}yf'(x) \right\} \nu(\rd y) , 
\end{equation*}
for any $f\in C_b^2(\real)$ and $\ x\in\real.$
See, e.g., equation (3.18) in \cite{Ap09}.
The L\'evy measure $\nu$ satisfies assumptions \eqref{ass_nu1} and \eqref{ass_nu2}. 
We assume that $W$ and $L$ are independent.
In considering SDE \eqref{SDE}, we introduce the following SDE for each $n\in\n$:
\begin{align}\label{SDE_n}
    \rd X_t^{(n)}=b\left(X_t^{(n)}\right) \rd t + \sigma_1 \rd W_t + \sigma_2 \rd L_t^{(n)},\ X_0^{(n)}=x\in\real
\end{align}
for $t\in[0,T]$, where $L^{(n)} = \{L_t^{(n)}\}_{t\in[0,T]}$ is a truncated L\'{e}vy process with jump sizes larger than $n$ of $L$.
Let $X^\ast:=\{X_t^\ast\}_{t\in[0,T]}$ and $X^{(\ast,n)}:=\{X_t^{(\ast,n)}\}_{t\in[0,T]}$, defined as
\begin{align*}
    X^{\ast}_t =\sup_{t\in[0,t]} X_s,\ X_t^{(\ast,n)}=\sup_{s\in[0,t]}X_s^{(n)} \textrm{~for~each~} t\in [0,T] \textrm{~and~}n\in\n,
\end{align*}
be the running maximums of the solutions $X$ and $X^{(n)}$ to SDE \eqref{SDE} and \eqref{SDE_n}, respectively.
It is well-known that when $b$ is Lipschitz continuous, SDEs \eqref{SDE} and \eqref{SDE_n} have a unique solution (e.g. see \cite{Pr05}).
This paper aims to show that the distribution of $X_t^\ast$ is absolutely continuous on the Lebesgue measure on $\real$ for all $t>0$.
The running maximum process has received widespread attention in recent years as an interesting objective both practically and theoretically (cf. \cite{GoMiUr22,PiSc21}).
The following results for the special case of the law of $X^\ast$ are known.
The density function for the maximum of Brownian motion (i.e., $x=b=\sigma_2=0$ and $\sigma_1=1$) is well known. See, e.g., \cite{KaSh91}.
The law of the maximum of Levy motion (i.e., $x=b=\sigma_1=0$ and $\sigma_2=1$) is also well known. See, for example, \cite{Ch13,KuPa13}. 

The following prior studies are based on the simultaneous dealing of Brownian motion and truncation L\'evy processes.
Song and Zhang \cite{SoZh15} study the existence of distributional density of $X_t$ and the weak continuity in the first variable of the distributional density under full Hormander's conditions.
This proof is proved by showing the statement for $X_t^{(1)}$.
Song and Xie \cite{SoXi18} show the existence of density functions for the running maximum $X_t^{(\ast,1)}$ of a L\'evy It\^o diffusion.
They claimed that if $b$ is Lipschitz continuous in Lemma 4.3 of \cite{SoXi18}, they can prove the existence of the density function of $X_t^{(\ast,1)}$. However, we cannot follow them because the product of weakly convergent sequences does not necessarily converge to the product of their convergences.
These are proved similarly if jump size $n$ is a finite value.
However, to the best of our knowledge, the results of the non-truncated L\'evy process are not known.
This is because Bismut’s approach to Malliavin
calculus for jump processes can simply calculate the concrete form only for finite jumps using Proposition 2.11 of \cite{SoZh15}.
In this paper, we show the existence of a density function for $X^\ast$ using the proof method of \cite{SoXi18} and the fact that the Malliavin calculus for $L$ can be defined by the limit of that of $L^{(n)}$.

This paper is organized as follows:
Section \ref{Notaion} describes the notations used in this paper and the main theorem.
In Section \ref{Bismut}, we recall Bismut’s approach to the Malliavin calculus with jumps.
In Section \ref{Regularity}, we explain and extend the results of Song and Xie \cite{SoXi18}.
In Section \ref{Applying}, we apply the results of the previous section to our stochastic differential equations.
In Section \ref{proofmainTh1}, we prove Theorem \ref{mainThm1}, which is the main result of this paper.
Section \ref{Appendices} presents some lemmas needed for proving the main results.

\section{Notation and result}\label{Notaion}
Let $L = \{L_t\}_{0\le t\le T}$ and $L^{(n)} = \{L_t^{(n)}\}_{t\in[0,T]}$ be a pure-jump process with and one truncated by $[-n,n]\setminus\{0\}$, respectively.
The jump size of $L$ and $L^{(n)}$ at time $t$ is defined by $\Delta L_t = L_t-L_{t-}$ and $\Delta L_t^{(n)}:=L_t^{(n)}-L_{t-}^{(n)}$ for any $t>0$ and $\Delta L_0:= 0$ and $\Delta L_0^{(n)}:= 0$.
The Poisson random measure associated to $L$ and $L^{(n)}$ on $\mathcal{B}([0,T])\times \mathcal{B}(\real\setminus\{0\})$ is denoted by
$N(t,F)= \sum_{0\le s\le t} 1_F(\Delta L_s)$ and $N^{(n)}(t,G)= \sum_{0\le s\le t} 1_F(\Delta L_s^{(n)})$ for $t\in[0,T]$ and $F\in\mathcal{B}(\real\setminus\{0\})$, respectively.
The L\'evy measure of $L$ and $L^{(n)}$ on $\mathcal{B}(\real\setminus\{0\})$ is defined as $\nu(\rd z)=c(z)\rd z$ and $c(z)\1_{\{|z|< n\}}(z)\rd z$, where the positive function $c$ satisfies that there exist some constant $\beta>1$ and $C>0$ such that
\begin{align}
    &\int_{\real\setminus\{0\}} (1\wedge |z|^2)\nu(\rd z)<\infty,\quad\lim_{n\to\infty} \int_{\real\setminus\{0\}} |z|^p\1_{\{|z|\ge n\}}(z)\nu(\rd z)=0\textrm{~for~any~}p\in(1,\beta),\label{ass_nu1}\\
    &\sup_{n\in\n}\left|\int_{1\le |z|< n} z\nu(\rd z)\right|<\infty\textrm{~~and~~}\int_{0+}\nu(\rd z)=\infty, \quad \left|\frac{c'(z)}{c(z)}\right|\le C \left(1\vee \frac{1}{|z|}\right)\textrm{~for~any~}z\neq 0.\label{ass_nu2}
\end{align}
The compensated Poisson random measure of $L$ and $L^{(n)}$ is defined as $\nt$ and $\nt^{(n)}$, respectively.

\begin{Eg}
When $L$ is a symmetric $\alpha$-stable process with $\alpha\in(1,2)$, then for any $z\neq 0$
\begin{align*}
    c(z)=\frac{c_\alpha}{|z|^{1+\alpha}},\textrm{~where~}c_\alpha=\pi^{-1}\Gamma(\alpha+1)\sin\left(\frac{\alpha\pi}{2}\right),
\end{align*}
so that a L\'evy measure $\nu$ satisfies assumptions \eqref{ass_nu1} and \eqref{ass_nu2} for any $p\in(1,\alpha)$.
\end{Eg}
Our results are described below.
\begin{Thm}\label{mainThm1}
    Assume that $b:\real\to\real$ is once differentiable and its derivative is bounded and that a L\'evy measure $\nu$ of $L$ satisfies \eqref{ass_nu1} and \eqref{ass_nu2}.
    Let $\{X_t\}_{t\in[0,T]}$ be the solution to Eq. \eqref{SDE}.
    If $\sigma_1^2+\sigma_2^2\neq 0$, then for any $T>0$ the law of $X_T^\ast$ is absolutely continuous with respect to the Lebesgue measure.
\end{Thm}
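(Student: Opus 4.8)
The plan is to transport absolute continuity from the truncated models $X^{(n)}$ to the target model $X$ by a weak-limit argument that is engineered to never require convergence of a product of two merely weakly convergent sequences. For each fixed truncation level $n$ the driving noise $\sigma_1 W+\sigma_2 L^{(n)}$ is a Wiener--Poisson functional to which Bismut's Malliavin calculus applies concretely, so the abstract criterion of Sections \ref{Regularity}--\ref{Applying} can be invoked for the running maximum $X_T^{(\ast,n)}$. First I would record, for every $f\in C_b^1(\real)$, the integration-by-parts identity
\begin{equation*}
  \e\!\left[f'\!\left(X_T^{(\ast,n)}\right)\right]=\e\!\left[f\!\left(X_T^{(\ast,n)}\right)H_n\right],
\end{equation*}
where $H_n$ is the Malliavin weight produced by that machinery, built from the first-order Malliavin derivative of $X^{(n)}$ evaluated at the (a.s.\ unique) time where the maximum is attained and from the inverse of the associated Malliavin covariance. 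From this one reads off $\bigl|\e[f'(X_T^{(\ast,n)})]\bigr|\le \e[|H_n|]\,\|f\|_\infty$, so each $X_T^{(\ast,n)}$ already has a bounded density; the quantity I really want to extract here is the finiteness of $\e[|H_n|]$.

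Second, and this is the heart of the matter, I would prove the \emph{uniform} bound $\sup_{n}\e[|H_n|]<\infty$. This is exactly where the hypothesis $\sigma_1^2+\sigma_2^2\neq0$ enters: it must be used to force the Malliavin covariance of $X_T^{(\ast,n)}$ to be nondegenerate with negative moments bounded uniformly in $n$. When $\sigma_1\neq0$ the Brownian direction alone supplies this nondegeneracy and is unaffected by the truncation of large jumps, so uniformity should be routine there. When $\sigma_1=0$ but $\sigma_2\neq0$ the nondegeneracy has to be manufactured from the infinitely many small jumps, using \eqref{ass_nu1}--\eqref{ass_nu2}, and one must check that reintroducing the (finitely many, bounded-intensity) large jumps does not destroy the uniform lower bound on the covariance. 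Combined with uniform $L^p$ estimates on the Malliavin derivatives of $X^{(n)}$, this delivers $\sup_n\e[|H_n|]=:C<\infty$. I expect this step to be the main obstacle, precisely because it demands negative-moment control of the covariance of a supremum functional that is uniform in the truncation level, especially in the degenerate-looking regime $\sigma_1=0$.

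Third, I would pass to the limit. Since $L$ and $L^{(n)}$ coincide on $[0,T]$ off the event that $L$ has a jump of size at least $n$, whose probability $1-\exp(-T\,\nu(\{|z|\ge n\}))$ tends to $0$ by $\nu(\{|z|\ge n\})\downarrow0$, pathwise uniqueness for \eqref{SDE} gives $X_t=X_t^{(n)}$ for all $t\le T$ on that event; hence $X_T^{(\ast,n)}\to X_T^\ast$ in probability, a fortiori in distribution. For $f\in C_b^1(\real)$ the derivative $f'$ is bounded and continuous, so weak convergence gives $\e[f'(X_T^{(\ast,n)})]\to\e[f'(X_T^\ast)]$, and together with the uniform bound of the previous step this yields $|\e[f'(X_T^\ast)]|\le C\,\|f\|_\infty$ for all $f\in C_b^1(\real)$. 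The decisive point is that the passage to the limit is performed only on the single sequence $f'(X_T^{(\ast,n)})$ and never on the product $f(X_T^{(\ast,n)})H_n$; this is how I would circumvent the gap in \cite{SoXi18}, where the product of weakly convergent sequences was mishandled. Finally, the bound $|\e[f'(X_T^\ast)]|\le C\|f\|_\infty$ is the standard sufficient condition for the law of $X_T^\ast$ to be absolutely continuous with respect to Lebesgue measure (indeed with a density bounded by $C$), which completes the proof.
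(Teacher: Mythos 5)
Your plan is a genuinely different route from the paper's, but it contains a gap that I do not believe can be closed under the stated hypotheses. The paper never uses an integration-by-parts identity with a Malliavin weight. It uses the first-order (Bouleau--Hirsch type) criterion inherited from \cite{SoXi18}: one shows $X_T^\ast\in\mathbb{W}_\Theta^{1,p}$ and $D_\Theta X_T^\ast\neq 0$ a.s.\ (via the explicit representation of $D_\Theta X_t$ as a limit of the truncated derivatives and a choice of $\Theta$ making $D_\Theta X_t>0$ for all $t$), and then concludes absolute continuity directly from closability of $D_\Theta$, with no weight $H_n$ and no inverse covariance ever appearing. Your first step already runs into trouble: the identity $\e[f'(X_T^{(\ast,n)})]=\e[f(X_T^{(\ast,n)})H_n]$ requires $H_n$ to be a Skorokhod divergence of $D X_T^{(\ast,n)}$ divided by the Malliavin covariance, and constructing that divergence forces you to differentiate the covariance, i.e.\ it needs \emph{second-order} Malliavin differentiability of the running maximum. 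The supremum functional is in general only once differentiable (its derivative involves the random argmax time, which is not itself differentiable), so the weight $H_n$ is not available by the standard machinery. Note also that your conclusion --- a density bounded by $C$ --- is strictly stronger than what the theorem asserts, which is itself a warning sign that the route is asking for more regularity than the hypotheses provide.

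The second step is an independent and, I think, fatal obstacle in the regime $\sigma_1=0$. A uniform bound $\sup_n\e[|H_n|]<\infty$ would require negative moments of the Malliavin covariance, uniformly in $n$. With the natural choice of perturbation (the one the paper uses), the covariance when $\sigma_1=0$ is comparable to $\int_0^T\int_{|z|>0}\eta(z)\,N(\rd s,\rd z)$ with $\eta(z)=|z|^2$ near the origin. Assumptions \eqref{ass_nu1}--\eqref{ass_nu2} only guarantee $\int_{0+}\nu(\rd z)=\infty$, which makes this random variable almost surely strictly positive but says nothing quantitative about how fast $\nu$ blows up at $0$; without a lower bound of the type $\nu(\rd z)\ge c|z|^{-1-\alpha}\rd z$ near the origin one cannot expect any negative moment, let alone one uniform in $n$. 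This is precisely why the paper settles for the qualitative nondegeneracy $\p(D_\Theta X_t>0,\ \forall t\in(0,T])=1$ (proved in the appendix from $\int_{0+}\nu=\infty$ alone), which suffices for the first-order criterion. Your third step --- the coupling argument showing $X_T^{(\ast,n)}\to X_T^\ast$ in probability on the event of no jump of size $\ge n$, and the observation that one only ever passes to the limit in the single sequence $f'(X_T^{(\ast,n)})$ --- is correct and is indeed a cleaner way to see the convergence than the paper's quasi-martingale estimates; but it cannot compensate for the missing weight and the missing uniform integrability.
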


We prove this result in Section 6.
To prepare for that proof, we introduce Malliavin Calculus.

\section{Bismut’s approach to Malliavin Calculus with Jumps}\label{Bismut}
This section covers some basic facts about Bismut’s approach to Malliavin calculus for jump processes (cf. \cite{Bi83,SoZh15,SoXi18} etc.).

Let $\Gamma\subset \real^d$ be an open set containing the origin.
Let us define
\begin{align}\label{AS_Gamma}
    \Gamma_0:=\Gamma\setminus \{0\},\quad
    \varrho(z):= 1\vee \mathbf{d}\left(z,\Gamma_0^c\right)^{-1}
\end{align}
where $\mathbf{d}\left(z,\Gamma_0^c\right)^{-1}$ is the distance of $z$ to the complement of $\Gamma_0$.
Let $\Omega$ be the canonical space of all points $\omega=(w,\mu)$, where
\begin{itemize}
    \item $w:[0,1]\to \real^d$ is continuous function with $w(0)=0$;
    \item $\mu$ is an integer-valued measure on $[0,1]\times \Gamma_0$ with $\mu(A)<+\infty$ for any compact set $A\subset [0,1]\times \Gamma_0$.
\end{itemize}
Define the canonical process on $\Omega$ as follows: $\omega=(w,\mu)$,
\begin{align*}
    W_t(\omega):=w(t),\quad N(\omega; \rd t,\rd z):=\mu(\omega;\rd t ,\rd z):=\mu(\rd t,\rd z).
\end{align*}
Let $(\mathcal{F}_t)_{t\in[0,1]}$ be the smallest right-continuous filtration on $\Omega$ such that $W$ and $N$ are optional.
In the following, we write $\mathcal{F}:=\mathcal{F}_1$, and endow $(\Omega,\mathcal{F})$ with unique probability measure $\p$ such that
\begin{itemize}
    \item $W$ is a standard $d$-dimensional Brownian motion;
    \item $N$ is a Poisson random measure with intensity $\rd t\nu(\rd z)$, where $\nu(\rd z)=\kappa(z)\rd z$ with
    \begin{align}\label{AS_kappa}
        \kappa\in C^1(\Gamma_0; (0,\infty)),\ \int_{\Gamma_0}(1\wedge |z|^2)\kappa(z)\rd z<+\infty,\ |\nabla \log \kappa(z)|\le C \varrho (z),
    \end{align}
    where $\varrho(z)$ is defined by Eq \eqref{AS_Gamma};
    \item $W$ and $N$ are independent.
\end{itemize}
In the following, we write the compensated Poisson random measure  $N$ as by
\begin{align*}
    \widetilde{N}(\rd t,\rd z):=N(\rd t,\rd z)-\nu(\rd z)\rd t.
\end{align*}
Let $p\ge 1$ and $i=1,2$.
We introduce the following spaces for later use.
\begin{itemize}
    \item $L^p(\Omega)$:
        The space of all $\mathcal{F}$-measurable random variables with finite norm:
    \begin{align*}
        \|F\|_p:=\e\left[|F|^p\right]^{\frac{1}{p}}.
    \end{align*}
    \item $\mathbb{L}_p^i$:
        The space of all predictable processes:
            $\xi:\Omega\times [0,1]\times\Gamma_0\to \real^k$ with finite norm:
    \begin{align*}
        \|\xi\|_{\mathbb{L}_p^i}
        :=\e\left[\left(\int_0^1 \int_{\Gamma_0}|\xi(s,z)|^i \nu(\rd z)\rd s\right)^{\frac{p}{i}} \right]^{\frac{1}{p}}
            +\e\left[\int_0^1 \int_{\Gamma_0}|\xi(s,z)|^p \nu(\rd z)\rd s \right]^{\frac{1}{p}}
            <\infty.
    \end{align*}
    \item $\mathbb{H}_p$: The space of all measurable adapted processes $h:\Omega\times [0,1]\to\real^d$ with finite norm:
        \begin{align*}
            \|h\|_{\mathbb{H}_p}
            :=\e\left[\left(\int_0^1 |h(s)|^2 \rd s\right)^{\frac{p}{2}} \right]^{\frac{1}{p}}
            <\infty.
        \end{align*}
    \item $\mathbb{V}_p$: The space of all predictable processes $\mathbf{v}: \Omega\times [0,1]\times \Gamma_0\to\real^d$ with finite norm:
    \begin{align*}
        \|\mathbf{v}\|_{\mathbb{V}_p}
        :=\|\nabla_z \mathbf{v}\|_{\mathbb{L}_p^1}
            +\|\mathbf{v}\varrho\|_{\mathbb{L}_p^1}
            <\infty,
    \end{align*}
    where $\varrho(z)$ is defined by Eq \eqref{AS_Gamma}.
    Hereafter denoted as
    \begin{align*}
        \mathbb{H}_{\infty-}:=\bigcap_{p\ge 1}\mathbb{H}_p,\quad
        \mathbb{V}_{\infty-}:=\bigcap_{p\ge 1}\mathbb{V}_p.
    \end{align*}
    \item $\mathbb{H}_0$: The space of all bounded measurable adapted processes $h:\Omega\times [0,1]\to \real^d$.
    \item $\mathbb{V}_0$: The space of all predictable processes $\mathbf{v}:\Omega\times [0,1]\times \Gamma_0\to \real^d$ with the following properties:
    (i) $\mathbf{v}$ and $\nabla_z \mathbf{v}$ are bounded;
    (ii) there exists a compact subset $U\subset \Gamma_0$ such that
    \begin{align*}
        \mathbf{v}(t,z)=0,\quad \forall z\in U.
    \end{align*}
\end{itemize}
Let $C_p^\infty (\real^m)$ be the class of all smooth functions on $\real^m$ whose all of their derivatives have at most polynomial growth.
Let $\mathcal{F}C_p$ be the class of all Wiener-Poisson functionals on $\Omega$ with the following form:
\begin{align*}
    F(\omega)=f(W(h_1),\ldots,W(h_{m_1}),N(g_1),\ldots,N(g_{m_2})),
\end{align*}
where $f\in C_p^\infty (\real^{m_1+m_2}),h_1,\ldots,h_{m_1}\in\mathbb{H}_0$ and $g_1,\ldots,g_{m_2}\in\mathbb{V}_0$ are non-random and real-valued, and for $j=1,2,\ldots,m_1$ and $k=1,2,\ldots, m_2$, we define
\begin{align*}
    W(h_j):=\int_0^1 \langle h_j(s),\rd W_s \rangle_{\real^d}\textrm{~and~}
    N(g_k):=\int_0^1 \int_{\Gamma_0} g_k(s,z) N(\rd s, \rd z).
\end{align*}
For any $p>1$ and $\Theta=(h,\mathbf{v})\in\mathbb{H}_p\times \mathbb{V}_p$, let us denote
\begin{align*}
    D_{\Theta}F
    :=\sum_{i=1}^{m_1} (\partial_i f)(\cdot)\int_0^1 \langle h(s),h_i \rangle_{\real^d} \rd s
    +\sum_{j=1}^{m_2} (\partial_{j+m_1} f)(\cdot) \int_0^1 \int_{\Gamma_0} \langle \mathbf{v}(s,z),\nabla_z g_j \rangle_{\real^d}  N(\rd s, \rd z),
\end{align*}
where “$(\cdot)$" stands for $W(h_1),\ldots,W(h_{m_1}),N(g_1),\ldots,N(g_{m_2})$.
\begin{Def}
    For $p>1$ and $\Theta=(h,\mathbf{v})\in\mathbb{H}_p\times \mathbb{V}_p$, we define the first order Sobolev space $\mathbb{W}_\Theta^{1,p}$ being the completion of $\mathcal{F}C_p$ in $L^p(\Omega)$ with respect to the norm:
    \begin{align*}
        \|F\|_{\Theta;1,p}:=\|F\|_{L^p}+\|D_\Theta F\|_{L^p}.
    \end{align*}
\end{Def}
The Banach space $\mathbb{W}_\Theta^{1,p}$ has weak compactness, which is the key to the proof of Theorem \ref{mainThm1}. (see Lemma 2.3 in \cite{SoXi18}).
Next, we give the results of applying the Malliavin calculus we just set up to Running Maximum Processes.

\section{Regularity of Running Maximum Processes}\label{Regularity}
In this section, we discuss the results of Song and Xie \cite{SoXi18} and their extensions.
Let $X^{(n)} = \{X_s^{(n)}\}_{s\ge 0}$ be a right continuous real-valued process.
For any fixed $T>0$ and $n\in\n$, in the following we shall write
\begin{align*}
    X_T^{(\ast,n)}:=\sup_{s\in[0,T]}X_s^{(n)}.
\end{align*}
In the same way as for Proposition 3.1 in \cite{SoXi18}, the following holds for each $n$.
\begin{Lem}\label{SoXiePro31}(\cite{SoXi18}, Proposition 3.1)\\
Let $X^{(n)}=\left\{X_s^{(n)}\right\}_{s\ge 0}$ be a right continuous process.
Suppose that for some $p>1$ and $\Theta= (h,\mathbf{v})\in \mathbb{H}_{\infty-}\times \mathbb{V}_{\infty-}$,
\begin{itemize}
    \item [1.] $\e\left[\left|X_s^{(\ast,n)}\right|^p \right]<\infty$, and for any $s\in[0,T]$, $X_s^{(n)}\in \mathbb{W}_{\Theta}^{1,p}$ and 
    \begin{align*}
        \e\left[\sup_{s\in[0,T]}\left|D_\Theta X_s^{(n)}\right|^p\right]<\infty,
    \end{align*}
    \item [2.] the process $\left\{D_\Theta X_s^{(n)}\right\}_{s\in[0,T]}$ processes a right continuous version.
\end{itemize}
Then $X_T^{(\ast,n)}\in \mathbb{W}_\Theta^{1,p}$ and 
\begin{align*}
    D_\Theta X_T^{(\ast,n)}\le \left(D_\Theta X^{(n)}\right)_T^\ast :=\sup_{s\in[0,T]} D_\Theta X_s^{(n)}.
\end{align*}
\end{Lem}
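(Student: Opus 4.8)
The plan is to follow the standard route for Malliavin differentiability of a supremum (as in Nualart's treatment of the Wiener case), reducing the continuous-time supremum to a countable maximum and then exploiting the weak compactness of $\mathbb{W}_\Theta^{1,p}$ noted after its definition (Lemma 2.3 in \cite{SoXi18}). First I would fix a countable dense subset $D=\{t_1,t_2,\ldots\}\subset[0,T]$. Since $X^{(n)}$ is right continuous, for every $s\in[0,T]$ one can choose $t_{k_j}\downarrow s$ with $X_{t_{k_j}}^{(n)}\to X_s^{(n)}$, so $\sup_{t\in D}X_t^{(n)}=X_T^{(\ast,n)}$ almost surely; by hypothesis 2 the same reduction applies to the right continuous version of $\{D_\Theta X_s^{(n)}\}$, yielding $\sup_{t\in D}D_\Theta X_t^{(n)}=(D_\Theta X^{(n)})_T^\ast$ and, in particular, measurability of the latter.

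Next I would work with the finite maxima $M_m:=\max_{1\le k\le m}X_{t_k}^{(n)}$. To differentiate them I would replace the non-smooth maximum by a smooth approximation, for instance the log-sum-exp map $\phi_\eps$ whose gradient $(\partial_1\phi_\eps,\ldots,\partial_m\phi_\eps)$ is nonnegative and sums to one. Applying the chain rule for $D_\Theta$ to $\phi_\eps(X_{t_1}^{(n)},\ldots,X_{t_m}^{(n)})$ expresses $D_\Theta\phi_\eps(\cdots)$ as a convex combination $\sum_k\partial_k\phi_\eps(\cdots)\,D_\Theta X_{t_k}^{(n)}$, which is therefore bounded above by $\max_{1\le k\le m}D_\Theta X_{t_k}^{(n)}\le (D_\Theta X^{(n)})_T^\ast$. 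Letting $\eps\to0$ and invoking closability of $D_\Theta$ together with hypothesis 1 (which controls the relevant $L^p$ norms), I would conclude $M_m\in\mathbb{W}_\Theta^{1,p}$ with $D_\Theta M_m\le (D_\Theta X^{(n)})_T^\ast$ almost surely.

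Then I would pass to the limit $m\to\infty$. Since $M_m\uparrow X_T^{(\ast,n)}$ pointwise and $|M_m|\le|X_T^{(\ast,n)}|\in L^p$ by hypothesis 1, dominated convergence gives $M_m\to X_T^{(\ast,n)}$ in $L^p(\Omega)$, while $\sup_m\|D_\Theta M_m\|_{L^p}\le\|(D_\Theta X^{(n)})_T^\ast\|_{L^p}<\infty$. Hence $\{M_m\}$ is bounded in $\mathbb{W}_\Theta^{1,p}$, and the weak compactness of this space furnishes a subsequence $M_{m_j}$ converging weakly in $\mathbb{W}_\Theta^{1,p}$; the strong $L^p$ limit forces the weak limit to equal $X_T^{(\ast,n)}$, so that $X_T^{(\ast,n)}\in\mathbb{W}_\Theta^{1,p}$ and $D_\Theta M_{m_j}\rightharpoonup D_\Theta X_T^{(\ast,n)}$ weakly in $L^p$.

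Finally, to obtain the stated inequality I would observe that the set $\{G\in L^p:G\le (D_\Theta X^{(n)})_T^\ast \text{ a.s.}\}$ is convex and strongly closed, hence weakly closed; since each $D_\Theta M_{m_j}$ belongs to it, so does the weak limit, giving $D_\Theta X_T^{(\ast,n)}\le (D_\Theta X^{(n)})_T^\ast$. The main obstacle is the limit passage in $\mathbb{W}_\Theta^{1,p}$: identifying the weak limit of the derivatives and transporting the pointwise upper bound across it, which is precisely where the weak compactness of $\mathbb{W}_\Theta^{1,p}$ and the convex-closedness argument are essential. A secondary technical point is justifying the chain rule and closability for the smooth approximation $\phi_\eps$ within the present Bismut framework.
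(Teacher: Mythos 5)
Your proposal is correct and follows essentially the same route as the argument the paper relies on: the paper does not reproduce a proof of this lemma but simply invokes Proposition 3.1 of \cite{SoXi18}, whose proof is exactly your scheme --- reduce to a countable dense set (which should include $T$ itself) via right continuity, bound the derivative of finite maxima by a convex-combination/chain-rule argument, and pass to the limit using the weak compactness of $\mathbb{W}_\Theta^{1,p}$ (Lemma 2.3 in \cite{SoXi18}) together with the fact that $\{G\in L^p: G\le (D_\Theta X^{(n)})_T^\ast\ \text{a.s.}\}$ is convex and strongly closed, hence weakly closed. The only points to tidy are the chain rule for smooth functions of elements of $\mathbb{W}_\Theta^{1,p}$ (Theorem 2.6 in \cite{SoXi18}), which you correctly flag as the remaining technical ingredient, and the identification of the weak limit as $D_\Theta M_m$ when $\eps\to 0$, which again uses the weak compactness lemma rather than closability alone.
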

The same holds true for the limit with respect to $n$.
\begin{Lem}\label{SoXieThm32}
In the setup of Lemma \ref{SoXiePro31}.
We set
\begin{align*}
    X_T^\ast :=\sup_{s\in[0,T]} X_s.
\end{align*}
In addition, for some $p>1$
\begin{align*}
    \lim_{n\to\infty}\e\left[\sup_{t\in[0,T]}\left|X_t^{(n)}-X_t\right|^p\right]=0.
\end{align*}
Then $X_T^\ast\in \mathbb{W}_\theta^{1,p}$ and a sequence $\{D_\Theta X_s^{(n)}\}_{s\in[0,T]}$ converges to $D_\Theta X=\{D_\Theta X_s\}_{s\in[0,T]}$ in the weak topology of $L^p(\Omega\times [0,T])$.
Moreover, suppose that this $D_\Theta X$ satisfies the following assumptions:
\begin{itemize}
    \item [1.] $\e\left[\left|X_s^{\ast}\right|^p \right]<\infty$, and for any $s\in[0,T]$, $X_s\in \mathbb{W}_{\Theta}^{1,p}$ and 
    \begin{align*}
        \e\left[\sup_{s\in[0,T]}\left|D_\Theta X_s\right|^p\right]<\infty,
    \end{align*}
    \item [2.] the process $\left\{D_\Theta X_s\right\}_{s\ge 0}$ processes a right continuous version.
\end{itemize}
Then, 
    \begin{align*}
        D_\Theta X_T^\ast \le \left(D_\Theta X\right)_T^\ast := \sup_{s\in[0,T]} D_\Theta X_s,
    \end{align*}
    and if
\begin{align*}
    \p\left(D_\Theta X_t\neq 0\textrm{~on~} \left\{t\in (0,T]: X_t=X_t^{\ast}\right\}\right)=1,
\end{align*}
then the law of $X_T^{\ast}$ is absolutely continuous with respect to the Lebesgue measure.
\end{Lem}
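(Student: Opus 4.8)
The plan is to lift the single-process statement of Lemma~\ref{SoXiePro31} to the limit $X$ by a weak-compactness argument, taking care \emph{not} to pass the supremum bound to the limit term by term, and then to read off absolute continuity from the non-vanishing of the Malliavin derivative at the attaining time, as in \cite{SoXi18}. First I would record the consequences of the hypothesis $\lim_n \e[\sup_{s\in[0,T]}|X_s^{(n)}-X_s|^p]=0$: since $|X_T^{(\ast,n)}-X_T^\ast|\le \sup_s|X_s^{(n)}-X_s|$ we get $X_T^{(\ast,n)}\to X_T^\ast$ in $L^p(\Omega)$ and $X_s^{(n)}\to X_s$ in $L^p(\Omega\times[0,T])$, while along a subsequence $\sup_s|X_s^{(n)}-X_s|\to 0$ almost surely, so the a.s.\ uniform limit of the right-continuous paths $X^{(n)}$ is right-continuous and $X$ admits a right-continuous version.

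The central new ingredient is the weak compactness of $\mathbb{W}_\Theta^{1,p}$ (Lemma~2.3 of \cite{SoXi18}). Reading condition~1 of Lemma~\ref{SoXiePro31} uniformly in $n$, so that $\sup_n \e[\sup_s|D_\Theta X_s^{(n)}|^p]<\infty$, both $\{X_s^{(n)}\}_n$ (for fixed $s$) and $\{X_T^{(\ast,n)}\}_n$ are bounded in $\mathbb{W}_\Theta^{1,p}$ and converge in $L^p(\Omega)$ to $X_s$ and $X_T^\ast$ --- for the second sequence the derivative bound $\|D_\Theta X_T^{(\ast,n)}\|_{L^p}\le \e[\sup_s|D_\Theta X_s^{(n)}|^p]^{1/p}$ follows from $D_\Theta X_T^{(\ast,n)}\le (D_\Theta X^{(n)})_T^\ast$ in Lemma~\ref{SoXiePro31}. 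Weak compactness then forces $X_s,X_T^\ast\in\mathbb{W}_\Theta^{1,p}$ together with $D_\Theta X_s^{(n)}\rightharpoonup D_\Theta X_s$ and $D_\Theta X_T^{(\ast,n)}\rightharpoonup D_\Theta X_T^\ast$. Since $L^p(\Omega\times[0,T])$ is reflexive for $p>1$, the family $\{D_\Theta X_\cdot^{(n)}\}_n$ has weak limit points, and the fixed-$s$ identification pins each such limit down to $D_\Theta X_s$, so the whole sequence converges weakly in $L^p(\Omega\times[0,T])$ to $D_\Theta X=\{D_\Theta X_s\}$, as asserted.

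With $X$ right-continuous, $X_s\in\mathbb{W}_\Theta^{1,p}$, and $D_\Theta X$ now identified, I would \emph{apply Lemma~\ref{SoXiePro31} directly to the limit process $X$}, whose hypotheses are exactly conditions~1 and~2 assumed on $D_\Theta X$; this yields $D_\Theta X_T^\ast\le (D_\Theta X)_T^\ast$. I regard this detour as the heart of the matter: one cannot instead let $n\to\infty$ in $D_\Theta X_T^{(\ast,n)}\le \sup_s D_\Theta X_s^{(n)}$, because the supremum does not commute with weak limits and there is no reason for $\sup_s D_\Theta X_s^{(n)}$ to converge weakly to $\sup_s D_\Theta X_s$ --- precisely the pitfall flagged in the introduction.

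Finally, the facts now in hand for $X$ --- membership $X_T^\ast\in\mathbb{W}_\Theta^{1,p}$, the bound $D_\Theta X_T^\ast\le (D_\Theta X)_T^\ast$, and the non-degeneracy $\p(D_\Theta X_t\neq 0\text{ on }\{t\in(0,T]:X_t=X_t^\ast\})=1$ --- are exactly those under which \cite{SoXi18} obtain absolute continuity. The mechanism is that a time $\tau\in(0,T]$ attaining the maximum satisfies $X_\tau=X_T^\ast=X_\tau^\ast$, hence lies in $\{t:X_t=X_t^\ast\}$ where $D_\Theta X_\tau\neq 0$; the running-maximum derivative relation then identifies $D_\Theta X_T^\ast$ with the derivative at $\tau$, giving $D_\Theta X_T^\ast\neq 0$ a.s., and the one-dimensional Bismut criterion along the quasi-invariant flow generated by $\Theta$ produces a density for $X_T^\ast$. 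The step I expect to demand the most care is the uniform-in-$n$ control of the $\mathbb{W}_\Theta^{1,p}$-norms underlying the weak-compactness step, as the stated hypotheses are phrased for each fixed $n$.
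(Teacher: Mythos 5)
Your proposal follows essentially the same route as the paper: weak compactness of $\mathbb{W}_\Theta^{1,p}$ (Lemma 2.3 of \cite{SoXi18}) to obtain $X_T^\ast\in\mathbb{W}_\Theta^{1,p}$ and the weak convergence $D_\Theta X_\cdot^{(n)}\rightharpoonup D_\Theta X_\cdot$, the argument of Proposition 3.1/Theorem 3.2 of \cite{SoXi18} applied to the limit process $X$ (whose hypotheses are exactly conditions 1 and 2 assumed on $D_\Theta X$) to get $D_\Theta X_T^\ast\le (D_\Theta X)_T^\ast$ together with the identification $D_\Theta X_t=D_\Theta X_T^\ast$ on the attaining set, and finally the closability-based criterion $\1_A(X_T^\ast)\,D_\Theta X_T^\ast=0$ a.s.\ for Lebesgue-null $A$ combined with the non-degeneracy to conclude absolute continuity. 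Your explicit remarks on the need for a uniform-in-$n$ bound on $\e[\sup_s|D_\Theta X_s^{(n)}|^p]$ and on not passing the supremum through the weak limit are sound and consistent with the paper's (terser) proof.
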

\begin{proof}
    From Lemma 2.3 in \cite{SoXi18} and assumptions, we obtain $X_T^\ast\in \mathbb{W}_\theta^{1,p}$ and
    \begin{align*}
        \lim_{n\to\infty} D_\Theta X_\cdot^{(n)} =D_\Theta X_\cdot \textrm{~~weakly~in~}L^p(\Omega\times[0,T]).
    \end{align*}
    This can be proved in the same way as in Proposition 3.1 and Theorem 3.2 in \cite{SoXi18} since
    \begin{align*}
        1&=\p\left(D_\Theta X_t=D_\Theta X_T^\ast\textrm{~on~} \left\{t\in [0,T]: X_t=X_t^{\ast}\right\}\right)\\
        &\le \p\left(D_\Theta X_t=D_\Theta X_T^\ast\textrm{~on~} \left\{t\in (0,T]: X_t=X_t^{\ast}\right\}\right)\\
        &=1.
    \end{align*}
    In addition, by the closability of $D_\Theta$ (see Theorem 2.6 in \cite{SoXi18}), we obtain
    \begin{align*}
        \p\left(\1_{A}\left(D_\Theta X_T^\ast\right) D_\Theta X_T^\ast =0\right)=1,
    \end{align*}
    for any $A\in\mathcal{B}(\real)$ with $\textrm{Leb}(A)=0$.
    Thus, we have
    \begin{align*}
        1&=\p(\left\{\1_{A}\left(D_\Theta X_T^\ast\right) D_\Theta X_T^\ast =0\right\} 
        \cap
        \left\{D_\Theta X_t=D_\Theta X_T^\ast\textrm{~on~} \left\{t\in (0,T]: X_t=X_t^{\ast}\right\}\right\}\\
        &\quad\quad\cap
        \left\{D_\Theta X_t\neq 0\textrm{~on~} \left\{t\in (0,T]: X_t=X_t^{\ast}\right\}\right\}
        )\\
        &=\p(\left\{\1_{A}\left(D_\Theta X_T^\ast\right) D_\Theta X_t =0\textrm{~on~} \left\{t\in (0,T]: X_t=X_t^{\ast}\right\}\right\}\\
        &\quad\quad\cap
        \left\{D_\Theta X_t=D_\Theta X_T^\ast\textrm{~on~} \left\{t\in (0,T]: X_t=X_t^{\ast}\right\}\right\}\\
        &\quad\quad\cap
        \left\{D_\Theta X_t\neq 0\textrm{~on~} \left\{t\in (0,T]: X_t=X_t^{\ast}\right\}\right\}
        )\\
        &=\p(\left\{\1_{A}\left(D_\Theta X_T^\ast\right) =0\right\} 
        \cap
        \left\{D_\Theta X_t=D_\Theta X_T^\ast\textrm{~on~} \left\{t\in (0,T]: X_t=X_t^{\ast}\right\}\right\}\\
        &\quad\quad\cap
        \left\{D_\Theta X_t\neq 0\textrm{~on~} \left\{t\in (0,T]: X_t=X_t^{\ast}\right\}\right\}
        )\\
        &\le\p(\1_{A}\left(D_\Theta X_T^\ast\right) =0)\\
        &=1.
    \end{align*}
\end{proof}
Now we know the relationship between the Malliavin calculus of running maximum processes and the existence of the density function.
Next, we note the results of applying the Malliavin calculus to the SDE \eqref{SDE}.
\section{Applying Malliavin calculus to SDEs}\label{Applying}
In this section, to find the equation satisfied by $D_\Theta X$ for $X$ in equation \eqref{SDE}, we check an equation satisfied by $D_\Theta X^{(n)}$ for $X^{(n)}$ for equation \eqref{SDE_n}.
This is shown in the same way as for Lemma 4.3 in \cite{SoXi18}.
\begin{Lem}(\cite{SoXi18}, Lemma 4.3)\label{Lem4.3}\\
Assume that $b:\real\to \real$ is once differentiable and its derivative is bounded.
Then for any $\Theta= (h,\mathbf{v})\in \mathbb{H}_{\infty-}\times \mathbb{V}_{\infty-}$ and $t\in [0,T]$, $X_t^{(n)}\in \mathbb{W}_\Theta^{1,2}$ and
\begin{align*}
    D_\Theta X_t^{(n)}=\int_0^t b'\left(X_s^{(n)}\right) D_\Theta X_s^{(n)} \rd s + \sigma_1 \int_0^t h(s)\rd s + \sigma_2 \int_0^t \int_{0<|z|\le n} \mathbf{v}(s,z)N(\rd s,\rd z).
\end{align*}
\end{Lem}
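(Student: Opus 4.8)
The plan is to obtain the membership $X_t^{(n)}\in\mathbb{W}_\Theta^{1,2}$ and the stated linear equation for $D_\Theta X_t^{(n)}$ simultaneously, via a Picard approximation combined with the closability of $D_\Theta$. First I would set $X_t^{(n,0)}:=x$ and define the iterates
\begin{align*}
    X_t^{(n,m+1)}:=x+\int_0^t b\left(X_s^{(n,m)}\right)\rd s+\sigma_1 W_t+\sigma_2 L_t^{(n)}.
\end{align*}
Since $b'$ is bounded, $b$ is Lipschitz, so the classical theory for SDEs driven by $W$ and the truncated process $L^{(n)}$ yields uniform-in-$m$ moment bounds $\sup_m\e[\sup_{t\le T}|X_t^{(n,m)}|^p]<\infty$ and the $L^p$-convergence $\lim_m\e[\sup_{t\le T}|X_t^{(n,m)}-X_t^{(n)}|^p]=0$ for every $p\ge 1$.

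The core of the argument is an induction on $m$ showing that each iterate lies in $\mathbb{W}_\Theta^{1,2}$ with
\begin{align*}
    D_\Theta X_t^{(n,m+1)}=\int_0^t b'\left(X_s^{(n,m)}\right)D_\Theta X_s^{(n,m)}\rd s+\sigma_1\int_0^t h(s)\rd s+\sigma_2\int_0^t\int_{0<|z|\le n}\mathbf{v}(s,z)N(\rd s,\rd z).
\end{align*}
The base case is immediate since $X^{(n,0)}$ is constant. For the inductive step I would use three facts: the chain rule $D_\Theta b(X)=b'(X)D_\Theta X$, valid because $b\in C^1$ with bounded derivative so that composition preserves $\mathbb{W}_\Theta^{1,2}$; the identity $D_\Theta W_t=\int_0^t h(s)\rd s$, obtained by writing $W_t=W(\1_{[0,t]})$; and the derivative of the jump part, $D_\Theta L_t^{(n)}=\int_0^t\int_{0<|z|\le n}\mathbf{v}(s,z)N(\rd s,\rd z)$, which follows because $\nabla_z z=1$ while the deterministic compensator contributes nothing to $D_\Theta$.

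Next I would establish moment bounds that are uniform in $m$. The last two terms in the recursion do not depend on $m$, so their $L^2$-finiteness is checked once: the Brownian term via $\|h\|_{\mathbb{H}_2}$, and the jump term by decomposing $N=\nt+\nu\,\rd t$ and bounding the martingale part by a Burkholder--Davis--Gundy inequality in terms of the $\mathbb{L}_2^2$-norm of $\mathbf{v}$ and the compensator part by $\e[\int_0^t\int_{0<|z|\le n}|\mathbf{v}|\,\nu(\rd z)\rd s]\le\|\mathbf{v}\varrho\|_{\mathbb{L}_1^1}$, using $\varrho\ge 1$. The only $m$-dependence sits in $\int_0^t b'(X_s^{(n,m)})D_\Theta X_s^{(n,m)}\rd s$, which, since $|b'|\le\|b'\|_\infty$, is controlled by a Gronwall argument, giving $\sup_m\e[\sup_{t\le T}|D_\Theta X_t^{(n,m)}|^2]<\infty$. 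The same estimates applied to consecutive differences show that $\{D_\Theta X_\cdot^{(n,m)}\}_m$ is Cauchy in $L^2(\Omega\times[0,T])$, with limit $Y$ solving the linear equation obtained by replacing $X^{(n,m)}$ by $X^{(n)}$ above.

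Finally, since $X_t^{(n,m)}\to X_t^{(n)}$ in $L^2$ and $D_\Theta X_t^{(n,m)}\to Y_t$ in $L^2$, the closability of $D_\Theta$ (Theorem 2.6 in \cite{SoXi18}) gives $X_t^{(n)}\in\mathbb{W}_\Theta^{1,2}$ and $D_\Theta X_t^{(n)}=Y_t$, which is exactly the claimed equation. I expect the main obstacle to be the rigorous treatment of the jump term $\int_0^t\int_{0<|z|\le n}\mathbf{v}(s,z)N(\rd s,\rd z)$: because the truncated measure $c(z)\1_{\{|z|<n\}}\rd z$ still carries infinitely many small jumps, the non-compensated integral must be split as above and each piece controlled by the correct $\mathbb{L}_p^i$ and $\mathbb{V}_p$ norms, and one must verify that the chain rule together with the composition stability of $\mathbb{W}_\Theta^{1,2}$ under $C^1$ maps with bounded derivative keeps $b'(X_s^{(n,m)})D_\Theta X_s^{(n,m)}$ in the admissible class at every stage of the iteration.
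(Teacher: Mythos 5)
Your proposal is correct and follows essentially the same route as the paper: a Picard iteration whose Malliavin derivatives satisfy the linearized recursion, uniform moment bounds via Gronwall and Kunita-type inequalities, and passage to the limit using the closability of $D_\Theta$ (the paper itself delegates these steps to the proof of Lemma 4.3 in Song--Xie). The only cosmetic difference is that the paper first absorbs the compensator of the large jumps into a modified drift $b_n(x)=b(x)+\int_{1\le|z|<n}z\,\nu(\rd z)$ so that the noise becomes a single compensated Poisson integral matching the form treated in the cited lemma, whereas you keep the uncompensated integral $\int_0^t\int_{0<|z|\le n}\mathbf{v}(s,z)\,N(\rd s,\rd z)$ and split $N=\nt(\rd s,\rd z)+\nu(\rd z)\rd s$ only inside the estimates; the two are equivalent since $b_n'=b'$.
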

\begin{proof}
\begin{align*}
    X_t^{(n)}&=x+\int_0^t b\left(X_s^{(n)}\right)\rd s+\sigma_1 W_t + \sigma_2 L_t^{(n)}\\
    &=x+\int_0^t b\left(X_s^{(n)}\right)\rd s+\sigma_1 W_t\\ 
        &\qquad + \sigma_2 \left(\int_0^t \int_{|z|\ge 1} z \1_{\{0<|z|< n\}} N(\rd s,\rd z)+ \int_0^t \int_{0<|z|< 1} z \1_{\{0<|z|< n\}} \nt (\rd s,\rd z)\right)\\
    &=x+\int_0^t b\left(X_s^{(n)}\right)\rd s+\sigma_1 W_t + \sigma_2 \left(\int_0^t \int_{1\le |z|< n} z \rd s \nu(\rd z)+ \int_0^t \int_{0<|z|< n} z \nt (\rd s,\rd z)\right)\\
    &=x+\int_0^t b_n\left(X_s^{(n)}\right)\rd s+\sigma_1 W_t + \sigma_2 \int_0^t \int_{0<|z|< n} z \nt (\rd s,\rd z)
\end{align*}
where 
\begin{align*}
b_n(x):=b(x)+\int_{1\le |z|< n} z \nu (\rd z).
\end{align*}
Obviously, $b_n$ is once differentiable, and its derivative is bounded for each $n\in\n$ and $b_n'=b'$.
We will calculate $D_\Theta X_t$.
For each $m,n\in\n$, consider SDE
\begin{align*}
    X_t^{(m,n)}=x+\int_0^t b_{n}\left(X_t^{(m,n)}\right)+\sigma_1 W_t+ \sigma_2 \int_0^t \int_{0<|z|< n} z \nt (\rd s,\rd z).
\end{align*}
Then we have by triangle inequality, Gronwall's inequality, and Kunita's first inequality (\cite{Ap09} Theorem 4.4.23)
\begin{align*}
    \e\left[\sup_{s\le t}\left|X_s^{(m,n)}\right|^2\right]\le 4 |x|^2+4 t \e\left[\int_0^t b_{n}^2 \left(X_s^{(m,n)}\right)\rd s\right]\rd s + 4\sigma_1^2 t + 4\sigma_2^2 t \int_{0< |z|< n}|z|^2 \nu(\rd z).
\end{align*}
    Subsequent proofs can be done in precisely the same way as in Lemma 4.3 in \cite{SoXi18}.
\end{proof}
The following lemma defines $D_\Theta X$ and confirms that it satisfies the following equation.
This $D_\Theta X$ is defined in the limit of weak $L^p(\Omega\times[0,T])$ convergence of $D_\Theta X^{(n)}$, but it is found to be strongly $L^p(\Omega\times [0,T])$ convergent in practice.
\begin{Lem}\label{DthetaXt}
    Assume the same assumptions as in Lemma \ref{Lem4.3}.
Then for some $p\in(1,\beta)$, for some $n\in\n$, for any $q\in(1,\beta)$ and for any $\Theta= (h,\mathbf{v})\in \mathbb{H}_{\infty-}\times \mathbb{V}_{\infty-}$, where
\begin{align*}
        \lim_{n\to\infty}\int_0^T \int_{|z|>n} \left|\mathbf{v}(s,z)\right|^{\frac{q}{\beta}} \rd s \nu(\rd z)=0 \textrm{~~and~~}
        \int_0^T \int_{|z|>n} \left|\mathbf{v}(s,z)\right|^q \rd s \nu(\rd z)<\infty,
    \end{align*}
     $X_t^\ast\in \mathbb{W}_\Theta^{1,p}$ for any $t\in [0,T]$ and
\begin{align}\label{DX_eq}
    D_\Theta X_t=\int_0^t b'\left(X_s\right) D_\Theta X_s \rd s + \sigma_1 \int_0^t h(s)\rd s + \sigma_2 \int_0^t \int_{|z|>0} \mathbf{v}(s,z)N(\rd s,\rd z).
\end{align}
\end{Lem}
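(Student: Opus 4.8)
The plan is to realize $D_\Theta X$ and the identity \eqref{DX_eq} as the limit, as $n\to\infty$, of the objects produced by Lemma \ref{Lem4.3}, and—crucially—to show that this limit is strong in $L^p(\Omega\times[0,T])$, so that one can pass to the limit term by term in the equation for $D_\Theta X_t^{(n)}$. Throughout I would fix $p\in(1,\beta)$ and choose $q\in(p,\beta)$, so that $q/\beta<1<p<q$.

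First I would record the identity of Lemma \ref{Lem4.3},
\[
D_\Theta X_t^{(n)}=\int_0^t b'\!\left(X_s^{(n)}\right)D_\Theta X_s^{(n)}\,\rd s+\sigma_1\int_0^t h(s)\,\rd s+\sigma_2\int_0^t\!\int_{0<|z|\le n}\mathbf{v}(s,z)\,N(\rd s,\rd z),
\]
and establish a uniform a priori bound $\sup_{n}\e[\sup_{t\le T}|D_\Theta X_t^{(n)}|^p]<\infty$. This I would obtain from Gronwall's inequality: the drift is controlled by $\|b'\|_\infty<\infty$, while the jump term is handled by writing $N=\nt+\nu\,\rd s$ and applying Kunita's first inequality (as in the proof of Lemma \ref{Lem4.3}), the small-jump part near the origin being tamed by $\mathbf{v}\in\mathbb{V}_{\infty-}$ through the weight $\varrho$ and the large-jump part by the integrability hypotheses on $\mathbf{v}$.

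Next I would show that $\{D_\Theta X^{(n)}\}_n$ is Cauchy in $L^p(\Omega\times[0,T])$. For $m>n$, decomposing the drift difference as
\[
b'\!\left(X_s^{(n)}\right)\!\left(D_\Theta X_s^{(m)}-D_\Theta X_s^{(n)}\right)+\left(b'\!\left(X_s^{(m)}\right)-b'\!\left(X_s^{(n)}\right)\right)D_\Theta X_s^{(m)},
\]
the first summand is absorbed by Gronwall via $\|b'\|_\infty$, and the second tends to $0$ by dominated convergence, using continuity of $b'$, the uniform bound of the previous step, and the strong convergence $\e[\sup_{t\le T}|X_t^{(m)}-X_t^{(n)}|^p]\to 0$ (which follows from \eqref{ass_nu1} by a Kunita estimate on $L-L^{(n)}$; see the Appendix). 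The decisive term is the large-jump remainder $R_t^{n,m}:=\sigma_2\int_0^t\!\int_{n<|z|\le m}\mathbf{v}(s,z)\,N(\rd s,\rd z)$. As there are a.s.\ finitely many jumps with $|z|>n$, subadditivity for the exponent $q/\beta<1$ together with Campbell's formula give
\[
\e\!\left[\sup_{t\le T}\left|R_t^{n,m}\right|^{q/\beta}\right]\le |\sigma_2|^{q/\beta}\int_0^T\!\int_{|z|>n}|\mathbf{v}(s,z)|^{q/\beta}\,\rd s\,\nu(\rd z)\xrightarrow[n\to\infty]{}0,
\]
uniformly in $m$ by hypothesis; interpolating this against the uniformly bounded $q$-th moment furnished by the second hypothesis yields $\e[\sup_{t\le T}|R_t^{n,m}|^{p}]\to 0$ since $p\in(q/\beta,q)$. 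Gronwall then gives $\e[\sup_{t\le T}|D_\Theta X_t^{(m)}-D_\Theta X_t^{(n)}|^p]\to 0$.

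Finally, let $Y:=\lim_n D_\Theta X^{(n)}$ strongly in $L^p(\Omega\times[0,T])$. By the weak compactness of $\mathbb{W}_\Theta^{1,p}$ (Lemma 2.3 in \cite{SoXi18}) and the closability of $D_\Theta$ (Theorem 2.6 in \cite{SoXi18}), $X_t\in\mathbb{W}_\Theta^{1,p}$ and $Y_t=D_\Theta X_t$. Passing to the limit in the recorded identity then gives \eqref{DX_eq}: the drift converges to $\int_0^t b'(X_s)D_\Theta X_s\,\rd s$ by the strong convergence of both factors with $b'$ bounded and continuous, the Brownian term is independent of $n$, and the truncated jump integral converges in $L^p$ to $\sigma_2\int_0^t\!\int_{|z|>0}\mathbf{v}(s,z)\,N(\rd s,\rd z)$ by the same tail estimate, which at the same time shows this non-truncated integral is well defined in $L^p$. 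I expect the main obstacle to be exactly this large-jump analysis: since $L$ is not truncated, the uncompensated integral over $\{|z|>n\}$ must be made to vanish in $L^p$ uniformly in $m$, and it is the calibration of the exponents $q/\beta$ and $q$ against $\beta$ in \eqref{ass_nu1}—so that a sub-linear moment controls the tail while a moment of order $q>p$ enables interpolation—that closes the estimate and upgrades the abstract weak convergence to the strong convergence needed to identify \eqref{DX_eq}.
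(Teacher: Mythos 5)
Your proposal is correct and follows the same skeleton as the paper's proof: a uniform Gronwall bound on $\sup_n\e[\sup_{t\le T}|D_\Theta X_t^{(n)}|^{pq}]$, strong $L^p(\Omega\times[0,T])$ convergence of $D_\Theta X^{(n)}$ to the solution $Y$ of the limiting linear equation (splitting the error into the drift difference, handled by H\"older plus the convergence $X^{(n)}\to X$ and continuity of $b'$, and the forcing-term difference $C^{(n)}-C$), and finally identification $Y=D_\Theta X$ through uniqueness of the weak limit furnished by Lemma 2.3 of \cite{SoXi18} and the closability of $D_\Theta$. The one place where you genuinely diverge is the decisive tail estimate on $R^{n,m}_t=\sigma_2\int_0^t\int_{n<|z|\le m}\mathbf{v}\,N(\rd s,\rd z)$: the paper relegates this to Lemma \ref{lem_CnC_Lp_conv}, which is ``proved in the same way as'' Lemma \ref{XnX_Lp_conv}, i.e.\ via the Komatsu smoothing $\urde$ with exponent $r=q/\beta$, Kurtz's quasi-martingale maximal inequality to get convergence in probability, and then uniform integrability (Sato, Theorem 25.18) to upgrade to $L^p$; you instead exploit that there are a.s.\ finitely many jumps with $|z|>n$, use subadditivity of $x\mapsto x^{q/\beta}$ ($q/\beta<1$) together with the compensation formula to kill the $q/\beta$-moment of the supremum, and interpolate against the $q$-th moment to reach the exponent $p\in(q/\beta,q)$. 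Your route is more direct and makes transparent why the two integrability hypotheses on $\mathbf{v}$ appear with exactly the exponents $q/\beta$ and $q$; its one soft spot is the claimed uniform bound on $\e[\sup_{t\le T}|R^{n,m}_t|^{q}]$, which for $q>1$ cannot come from subadditivity and needs a Kunita/BDG-type estimate (requiring in particular control of $\int_{|z|>n}|\mathbf{v}|\,\nu(\rd z)$, available by interpolation since $1\in(q/\beta,q)$) --- a step you should spell out, though the paper itself is no more explicit at the corresponding point.
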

\begin{proof}
    By using Lemma 2.3 in \cite{SoXi18}, Lemma \ref{XnX_Lp_conv} and the closability of $D_\Theta$ (cf. Lemma 2.7 in [\cite{SoZh15}), we have
    \begin{align*}
        \lim_{n\to\infty}D_\Theta X_\cdot^{(n)} = D_\Theta X_\cdot \textrm{~~weakly~~in~~}L^p(\Omega\times [0,T]).
    \end{align*}
We verify that this $D_\Theta X=\{D_\Theta X_t\}_{t\in[0,T]}$ satisfies equation \eqref{DX_eq}.
We set $\{Y_t\}_{t\in [0,T]}$ as a solution of
\begin{align*}
    Y_t&=\int_0^t b'(X_s) Y_s \rd s + C_t,\textrm{~~where}\\
    C_t&=\sigma_1 \int_0^t h(s)\rd s + \sigma_2 \int_0^t \int_{|z|>0} \mathbf{v}(s,z)N(\rd s,\rd z),\\
    C_t^{(n)}&=\sigma_1 \int_0^t h(s)\rd s + \sigma_2 \int_0^t \int_{0<|z|<n} \mathbf{v}(s,z)N(\rd s,\rd z).
\end{align*}
We prove
\begin{align}\label{DXnY_Lp_conv}
    \lim_{n\to\infty}\e\left[\sup_{t\in[0,T]}\left|D_\Theta X_t^{(n)}-Y_t\right|^p\right]=0.
\end{align}
By using inequality $|a+b|^p \le 2^{p-1}(|a|^p+|b|^p)$ for any $a,b\in\real$, we obtain
\begin{align*}
    &\e\left[\sup_{t\in[0,T]}\left|D_\Theta X_t^{(n)}-Y_t\right|^p\right]\\
    &\le 2^{p-1}\e\left[\sup_{t\in[0,T]}\left| \int_0^t \left\{b'\left(X_s^{(n)}\right) D_\Theta X_s^{(n)}-b'\left(X_s\right) Y_s\right\}\rd s \right|^p\right]
    +2^{p-1} \e\left[\sup_{t\in[0,T]}\left|C_t^{(n)}-C_t \right|^p \right]\\
    &\le 2^{2(p-1)} \int_0^T \e\left[\left|b'\left(X_s^{(n)}\right)-b'\left(X_s\right)\right|^p \left|D_\Theta X_s^{(n)}\right|^p\right]\rd s\\ 
        &\quad+
        2^{2(p-1)}\left\|b'\right\|^p_\infty \int_0^T \e\left[\left| D_\Theta X_s^{(n)}- Y_s\right|^p\right]\rd s
        +
        2^{p-1} \e\left[\sup_{t\in[0,T]}\left|C_t^{(n)}-C_t \right|^p \right].
\end{align*}
The last and second inequality from the last follows from Jensen's inequality and Fubini's theorem.
By using Gronwall's inequality, we have
\begin{align*}
    &\e\left[\sup_{t\in[0,T]}\left|D_\Theta X_t^{(n)}-Y_t\right|^p\right]\\
    &\le 2^{2(p-1)}\exp\left(2^{2(p-1)}\|b'\|_\infty^p\right)\int_0^T \e\left[\left|b'\left(X_s^{(n)}\right)-b'\left(X_s\right)\right|^p \left|D_\Theta X_s^{(n)}\right|^p\right]\rd s\\
    &\quad+2^{2(p-1)}\exp\left(2^{2(p-1)}\|b'\|_\infty^p\right)\e\left[\sup_{t\in[0,T]}\left|C_t^{(n)}-C_t \right|^p \right]
    .
\end{align*}
We show
\begin{align}
    \lim_{n\to\infty}\int_0^T \e\left[\left|b'\left(X_s^{(n)}\right)-b'\left(X_s\right)\right|^p \left|D_\Theta X_s^{(n)}\right|^p\right]\rd s&=0 \label{bXnbX_Lp_conv},\\
    \lim_{n\to\infty}\e\left[\sup_{t\in[0,T]}\left|C_t^{(n)}-C_t \right|^p \right]&=0. \label{CnC_Lp_conv}
\end{align}
See Lemma \eqref{lem_CnC_Lp_conv} for proof of \eqref{CnC_Lp_conv}.
Here we show an equation \eqref{bXnbX_Lp_conv}.
Notice that $p\in(1,\beta)$, there exists $q>1$ such that $pq<\beta$ because of density theorem.
By using H\"older inequality, we have
\begin{align*}
    &\int_0^T \e\left[\left|b'\left(X_s^{(n)}\right)-b'\left(X_s\right)\right|^p \left|D_\Theta X_s^{(n)}\right|^p\right]\rd s\\
    &\le 
        \int_0^T \e\left[\left|b'\left(X_s^{(n)}\right)-b'\left(X_s\right)\right|^{\frac{pq}{q-1}}\right]^{\frac{q-1}{q}} \e\left[ \left|D_\Theta X_s^{(n)}\right|^{pq}\right]^{\frac{1}{q}}\rd s\\
    &\le
        T \e\left[\sup_{t\in[0,T]}\left|b'\left(X_t^{(n)}\right)-b'\left(X_t\right)\right|^{\frac{pq}{q-1}}\right]^{\frac{q-1}{q}} \e\left[ \sup_{t\in[0,T]}\left|D_\Theta X_t^{(n)}\right|^{pq}\right]^{\frac{1}{q}}.
\end{align*}
Since an inequality $|a+b|^p \le 2^{p-1}(|a|^p+|b|^p)$ for any $a,b\in\real$ and $p\ge 1$ and Jensen's inequality, we have
\begin{align*}
   &\e\left[\sup_{t\in[0,T]}\left|D_\Theta X_t^{(n)}\right|^{pq}\right]\\
    &\le
     2^{pq-1}\e\left[\sup_{t\in[0,T]}\left|\int_0^t b'\left(X_s^{(n)}\right) D_\Theta X_s^{(n)} \rd s\right|^{pq}\right]
     +2^{2(pq-1)}\e\left[\sup_{t\in[0,T]}\left|\int_0^t h(s)\rd s \right|^{pq}\right]\\
     &\qquad
     +2^{2(pq-1)}\e\left[\sup_{t\in[0,T]}\left|\int_0^t \int_{0<|z|< n} \mathbf{v}(s,z) N(\rd s,\rd z)\right|^{pq}\right]\\
    &\le 2^{pq-1}\|b'\|_\infty^{pq} \int_0^T \e\left[\sup_{u\in[0,s]}\left| D_\Theta X_u^{(n)}\right|^{pq}\right] \rd s\\
    &\qquad +2^{2(pq-1)}\left(\e\left[\int_0^T \left| h(s)\right|^{pq}\rd s \right]
     +\e\left[\sup_{t\in[0,T]}\left|\int_0^t \int_{0<|z|< n} \mathbf{v}(s,z) N(\rd s,\rd z)\right|^{pq}\right]\right).
\end{align*}
Gronwall’s inequality implies
\begin{align*}
    &\e\left[\sup_{t\in[0,T]}\left|D_\Theta X_t^{(n)}\right|^{pq}\right]\\
    &\le
    2^{2(pq-1)}\left(\e\left[\int_0^T \left| h(s)\right|^{pq}\rd s \right]
     +\e\left[\sup_{t\in[0,T]}\left|\int_0^t \int_{0<|z|< n} \mathbf{v}(s,z) N(\rd s,\rd z)\right|^{pq}\right]\right)
     e^{2^{pq-1}T \|b'\|_\infty^{pq}}.
\end{align*}
The boundedness of sup means with respect to time can be proved as in Lemma \eqref{XnX_Lp_conv} (ii).
Since assumptions of $h$ and $\mathbf{v}$, we have
\begin{align*}
    \sup_{n\in\n}\e\left[\sup_{t\in[0,T]}\left|D_\Theta X_t^{(n)}\right|^{pq}\right]<\infty.
\end{align*}
By Lemma \ref{XnX_Lp_conv}, boundedness of $b'$ and continuous mapping theorem, we have
\begin{align*}
    \lim_{n\to\infty}\e\left[\sup_{t\in[0,T]}\left|b'\left(X_t^{(n)}\right)-b'\left(X_t\right)\right|^{\frac{pq}{q-1}}\right]^{\frac{q-1}{q}}=0.
\end{align*}
We have \eqref{DXnY_Lp_conv}.
Since there is only one weak convergence destination, \eqref{DX_eq} follows.
\end{proof}
The proof of Theorem \ref{mainThm1} is now ready to be presented.
\section{Proof of Theorem \ref{mainThm1}}\label{proofmainTh1}
\begin{proof}
Applying It\^o formula to $e^{-\int_0^t b'(X_s) \rd s}D_\Theta X_t$ (e.g., see \cite{Pr05}, Corollary (Integration by Parts), P.84), we obtain
\begin{align*}
    e^{-\int_0^t b'\left(X_s \right) \rd s}D_\Theta X_t &= \int_{0+}^t e^{-\int_0^s b'\left(X_u \right) \rd u}\circ \rd D_\Theta X_{s-}
        +\int_{0+}^t D_\Theta X_{s-} \circ \rd e^{-\int_0^s b'\left(X_u \right) \rd u}\\
    &=\int_{0}^t e^{-\int_0^s b'\left(X_u \right) \rd u} \rd D_\Theta X_{s-}
        +\int_{0}^t D_\Theta X_{s-} \rd e^{-\int_0^s b'\left(X_u \right) \rd u}\\
    &=\int_{0}^t e^{-\int_0^s b'\left(X_u \right) \rd u} \left(b'\left(X_s \right) D_\Theta X_s+\sigma_1 h(s)\right)\rd s\\
    &\qquad+\int_{0}^t e^{-\int_0^s b'\left(X_u \right) \rd u} \sigma_2 \int_{|z|>0} \mathbf{v}(s,z) N(\rd s,\rd z)\\
    &\qquad +\int_0^t D_\Theta X_{s-}\left(-b'\left(X_s \right) \int_0^t e^{-\int_0^s b'\left(X_u \right) \rd u}\right)\rd s.
\end{align*}
For any $t>0$, set
\begin{align}\label{eta_function}
    h(t):=\sigma_1 e^{-\int_0^t b'\left(X_s \right) \rd s},\quad
    \mathbf{v}(t,z):=\sigma_2 e^{-\int_0^t b'\left(X_s \right) \rd s} \eta(z),\quad
    \eta(z)=\begin{cases}
        |z|^2, & |z|\le \frac{1}{4}\\
        0, & |z|>\frac{1}{2}\\
        \textrm{smooth}, & \textrm{otherwise}.
    \end{cases}
\end{align}
Since the function $\mathbf{v}$ is bounded, it satisfies the assumptions of Lemma \ref{DthetaXt}.
We have
\begin{align*}
    D_\Theta X_t&=e^{\int_0^t b'\left(X_s \right) \rd s}\left(\sigma_1^2 \int_0^t e^{-2\int_0^s b'\left(X_u \right) \rd u}\rd s +\sigma_2^2 \int_0^t \int_{|z|>0} e^{-2\int_0^s b'\left(X_u \right) \rd u}\eta(z) N(\rd s,\rd z)\right)\\
    & \ge e^{\int_0^t \left(b'\left(X_s \right)-2\|b\|_{\textrm{Lip}}\right)\rd s} \left(\sigma_1^2 t +\sigma_2^2 \int_0^t \int_{|z|>0} \eta(z) N(\rd s,\rd z)\right).
\end{align*}
Noticing the condition $\sigma_1^2+\sigma_2^2\neq 0$ and the fact
\begin{align}\label{prob1}
    \p\left(\int_0^t \int_{|z|>0} \eta(z) N(\rd s,\rd z),\ \forall t>0\right)=1,
\end{align}
we have
\begin{align*}
    \p\left(D_\Theta X_t>0,\ \forall t\in(0,T]\right)=1.
\end{align*}
See the section \ref{prob1proof} for proof of Eq. \eqref{prob1}.
So we have
\begin{align*}
    1=\p\left(D_\Theta X_t>0,\ \forall t\in(0,T]\right)\le \p\left(D_\Theta X_t\neq 0\textrm{~on~} \left\{t\in (0,T]: X_t=\sup_{s\in [0,t]}X_s\right\}\right)=1.
\end{align*}
Therefore, we conclude by Lemma \ref{SoXieThm32} that the law of $X_T^\ast$ is absolutely continuous with respect to the Lebesgue measure.
\end{proof}

\section*{Declarations}\label{Declarations}
{\bf Conflict of interest}\quad
The authors have no relevant financial or non-financial interests to disclose.
\section*{Funding}\label{Funding}
The second author was supported by JSPS KAKENHI Grant Number 23K12507.

\begin{appendices}
\section{Appendices}\label{Appendices}
We give some lemma to show Theorem \ref{mainThm1}.
In subsection \ref{prob1proof}, we confirm an equation \eqref{prob1}.
In subsection \ref{ProofofLpconv} we prove that $\{X^{(n)}\}_{n\in\n}$ converges to $X$ and in subsection \ref{Preparation} we prepare for it.

\subsection{A proof of Eq. \eqref{prob1}}\label{prob1proof}
In this section, we prove Eq. \eqref{prob1}. 
We set $t>0$, $\eta$ as \eqref{eta_function} and $\eps_k=\frac{1}{2^k}$ for any $k\in\n$.
Since
\begin{align*}
    \lim_{k\to\infty} \int_0^t \int_{\eps_k <|z|< n} \eta(z) N(\rd s,\rd z)&=\int_0^t \int_{0 <|z|\le n} \eta(z) N(\rd s,\rd z)\textrm{~~in~~} L^2(\Omega),\\
    \lim_{k\to\infty} \int_0^t \int_{\eps_k <|z|< n} \eta(z) N(\rd s,\rd z)&=\int_0^t \int_{0 <|z|< n} \eta(z) N(\rd s,\rd z) \textrm{~~in~distribution.}
\end{align*}
For any $t>0$,
\begin{align*}
    &\p\left(\int_0^t\int_{0<|z|< n} \eta(z) N(\rd s,\rd z)=0\right)\\
    &=\lim_{k\to\infty}\p\left(\int_0^t\int_{\eps_k<|z|< n} \eta(z) N(\rd s,\rd z)=0\right)\\
    &=\lim_{k\to\infty}\p\left(\int_{\eps_k<|z|< n} \eta(z) N(t,\rd z)=0\right)\\
    &\le \lim_{k\to\infty}\p\left(N\left(t,\left(\eps_k,\frac{1}{2}\right]\right)=0\right).
\end{align*}
Here, since for any $A\in\mathcal{B}(\real)\setminus\{0\}$, $\{N(t,A)\}_{t\ge 0}$ is a Poisson process with intensity $\nu(A)$ (e.g. see \cite{Ap09} Th 2.3.5), we have
\begin{align*}
    \lim_{k\to\infty}\p\left(N\left(t,\left(\eps_k,\frac{1}{2}\right]\right)=0\right)
    &=\lim_{k\to\infty} \exp\left(-t \nu\left(\left(\eps_k,\frac{1}{2}\right]\right)\right)\\
    &=0.
\end{align*}
The last equation follows from assumption \eqref{ass_nu2}.
We set for each $t>0$,
\begin{align*}
    I_t&=\int_0^t \int_{0<|z|< n} \eta (z) N(\rd s,\rd z),
\end{align*}
from countable additivity, we have
\begin{align*}
    \p\left(\bigcup_{t\in (0,\infty)\cap \mathbb{Q}} \left\{I_t=0 \right\}\right)
    \le
    \sum_{t\in (0,\infty)\cap \mathbb{Q}}\p\left( \left\{I_t=0 \right\}\right)
    =0 \quad(\textrm{see e.g. \cite{Wi12} 1.9(b)}).
\end{align*}
Thus we obtain
\begin{align*}
    \p\left(\bigcap_{t\in (0,\infty)\cap \mathbb{Q}} \left\{I_t \neq 0 \right\}\right)=1.
\end{align*}
Here, since $\eta \ge 0$, we obtain
\begin{align*}
    \p&\left(\bigcap_{0\le s\le u}\left\{I_s\le I_u\right\}\right)=1.
\end{align*}
By tightness of rational numbers, we obtain the following:
\begin{align*}
    \p\left(\forall t>0,\ I_t\neq 0\right)
    &\ge \p\left(\bigcap_{t\in (0,\infty)\cap \mathbb{Q}}\left\{ I_t\neq 0 \right\}\cap \bigcap_{0\le s\le u}\left\{I_s\le I_u\right\}\right)\\
    &=1.
\end{align*}

\subsection{Preparation for proof of convergence of $X^{(n)}$}\label{Preparation}
To prove Theorem \ref{mainThm1}, we apply a variation of the method introduced by Komatsu (\cite{Ko82}, proof of Theorem 1) in order to prove to converge of $X^{(n)}$.
This technique has been used to \cite{Na20}.
\begin{Lem}\label{abstruct}
For $\eps >0$, $\delta>1$ and $r\in(0,1]$, we can choose a smooth function $\pde$ which satisfies the following conditions,
\begin{equation*}
\pde(x) = \begin{cases}
	\textrm{between} \ 0\textrm{ and }2(x \log \delta)^{-1} &\quad \eps \delta^{-1}<x<\eps,\\
   0  &\quad \textrm{otherwise},
  \end{cases}
\end{equation*}
and $\int_{\eps \delta^{-1}}^{\eps} \pde(y) dy=1$.
We define $u_r(x)=|x|^r$ and $\urde=u_r\ast \pde$.
Then, $\urde \in C^2$ and for any $x\in \real$,
\begin{align}
\label{ude_ine1}|x|^r &\le \eps^r+\urde(x),\\
\label{ude_ine2}\urde(x)&\le |x|^r+\eps^r.
\end{align}
\end{Lem}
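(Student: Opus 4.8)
The plan is to build the density $\pde$ by hand, then read off the regularity of $\urde$ from the smoothness of $\pde$, and finally obtain both inequalities from the subadditivity of $t\mapsto t^r$ on $[0,\infty)$ together with the fact that $\pde$ is a probability density supported in $(\eps\delta^{-1},\eps)$.

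First I would construct $\pde$. The key computation is
\begin{align*}
    \int_{\eps\delta^{-1}}^{\eps} \frac{1}{y\log\delta}\,\rd y = \frac{\log\eps-\log(\eps\delta^{-1})}{\log\delta}=1,
\end{align*}
so the natural candidate $y\mapsto (y\log\delta)^{-1}$ already has unit mass on the interval, while the prescribed ceiling $2(y\log\delta)^{-1}$ leaves a factor-of-two slack. I would therefore fix a smooth cutoff $\chi$ with $0\le\chi\le 1$, supported in $(\eps\delta^{-1},\eps)$ and equal to $1$ on a compact subinterval carrying at least half of the mass of $(y\log\delta)^{-1}$, and set $\pde(y)=c\,\chi(y)(y\log\delta)^{-1}$ with $c:=\left(\int_{\eps\delta^{-1}}^{\eps}\chi(y)(y\log\delta)^{-1}\,\rd y\right)^{-1}$. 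Since $\chi\le 1$ forces $c\ge 1$ and the half-mass property forces $c\le 2$, the function $\pde$ is smooth, vanishes outside $(\eps\delta^{-1},\eps)$, satisfies $0\le\pde(y)\le 2(y\log\delta)^{-1}$, and integrates to $1$.

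With $\pde\in C_c^\infty$ in hand, the regularity of $\urde=u_r\ast\pde$ is immediate: since $u_r(x)=|x|^r$ is continuous, hence locally integrable, the convolution is $C^\infty$ (one may differentiate under the integral, $\urde^{(k)}=u_r\ast\pde^{(k)}$), and in particular $\urde\in C^2$. For the two inequalities I would write $\urde(x)=\int_{\eps\delta^{-1}}^{\eps}|x-y|^r\,\pde(y)\,\rd y$ and exploit that on the support $0<y<\eps$, so $y^r<\eps^r$. Subadditivity $(s+t)^r\le s^r+t^r$ for $s,t\ge 0$ and $r\in(0,1]$ gives, via $|x-y|\le|x|+|y|$, the pointwise bound $|x-y|^r\le |x|^r+y^r\le|x|^r+\eps^r$; integrating against the probability density $\pde$ yields \eqref{ude_ine2}. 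Symmetrically, $|x|\le|x-y|+|y|$ gives $|x|^r\le|x-y|^r+y^r\le|x-y|^r+\eps^r$, and integrating yields \eqref{ude_ine1}.

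The argument is elementary and I do not expect a genuine obstacle; the only point requiring a little care is to keep $\pde$ below the $y$-dependent ceiling $2(y\log\delta)^{-1}$ while forcing its integral to equal exactly $1$, and this is precisely what the factor-of-two slack in the displayed computation guarantees, since it ensures the renormalizing constant $c$ never exceeds $2$.
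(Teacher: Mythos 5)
Your proposal is correct, and it follows the standard Komatsu-type mollifier construction that the paper itself invokes (but does not write out — the lemma is stated without proof, with a pointer to \cite{Ko82} and \cite{Na20}): the unit-mass computation for $y\mapsto(y\log\delta)^{-1}$, the factor-of-two slack to accommodate a smooth cutoff, smoothness of $u_r\ast\pde$ from $\pde\in C_c^\infty$, and both inequalities from subadditivity of $t\mapsto t^r$ integrated against the probability density supported in $(0,\eps)$. No gaps; your argument in fact yields $\urde\in C^\infty$, which is stronger than the stated $C^2$.
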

We introduce a quasi-martingale and its properties.
Let $T\in[0,\infty]$ 
and $Z$ be a c\`adl\`ag adapted process defined on $[0,T]$.
A finite subdivision of $[0,T]$ is defined by
$\Delta t=(t_0,t_1,\ldots,t_{n+1})$ such that $0=t_0<t_1<\cdots<t_{n+1}=T$.
\begin{Def}
The mean variation of $X$ is defined by
\begin{equation*}
V_T(X):=\sup_{\Delta t}\e\left[\sum_{i=0}^n\left|\e\left[X_{t_i}-X_{t_{i+1}}|\mf_{t_i}\right]\right|\right].
\end{equation*}
\end{Def}
\begin{Def}
A c\`adl\`ag adapted process $Z$ is a quasi-martingale on $[0,T]$ if for each $t\in[0,T]$, $\e[|Z_t|]<\infty$ and $V_T(Z)<\infty$.
\end{Def}
Kurtz \cite{Ku91} proved the following lemma by using Rao's theorem (\cite{Pr05}, Section III, Theorem 17).
\begin{Lem}(\cite{Ku91}, Lemma 5.3)\label{quasiprop}
Let $Z$ be a c\`adl\`ag adapted process defined on $[0,T]$. Suppose that for each $t\in[0,T]$, $\e[|Z_t|]<\infty$ and $V_t(Z)<\infty$. Then, for each $h>0$,
\begin{equation*}
h\p\left(\sup_{t\in[0,T]}\left|Z_t\right|>h\right)\le V_T(Z)+\e\left[\left|Z_T\right|\right].
\end{equation*}
\end{Lem}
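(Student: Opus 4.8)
The plan is to recover the inequality along Kurtz's route, namely from Rao's decomposition theorem (\cite{Pr05}, Section~III, Theorem~17) together with the weak-type maximal inequality for nonnegative supermartingales (a consequence of optional stopping). Since $Z$ is c\`adl\`ag, adapted, integrable at each time, and satisfies $V_T(Z)<\infty$, Rao's theorem provides a decomposition $Z=Y-U$ on $[0,T]$ with $Y$ and $U$ right-continuous nonnegative supermartingales. Putting $W:=Y+U$, the process $W$ is again a nonnegative supermartingale and dominates $|Z|$ pathwise, because $|Z_t|=|Y_t-U_t|\le Y_t+U_t=W_t$ for every $t\in[0,T]$.

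The quantitative heart of the argument is the bound $\e[W_0]\le V_T(Z)+\e[|Z_T|]$, which I would read off from the explicit form of the decomposition. For a finite subdivision $\Delta t=(t_0,\dots,t_{n+1})$, write $d_i:=\e[Z_{t_i}-Z_{t_{i+1}}\mid\mf_{t_i}]$. The telescoping identity $Z_{t_i}=\e[Z_T\mid\mf_{t_i}]+\sum_{j\ge i}\e[d_j\mid\mf_{t_i}]$ holds by the tower property, and splitting $d_j=d_j^+-d_j^-$ and $Z_T=Z_T^+-Z_T^-$ exhibits $Z$ as a difference of the two nonnegative supermartingales $Y^{\Delta t}_i=\e[Z_T^+\mid\mf_{t_i}]+\sum_{j\ge i}\e[d_j^+\mid\mf_{t_i}]$ and $U^{\Delta t}_i=\e[Z_T^-\mid\mf_{t_i}]+\sum_{j\ge i}\e[d_j^-\mid\mf_{t_i}]$. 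Their sum has initial expectation $\e[W_0^{\Delta t}]=\e[|Z_T|]+\e[\sum_i|d_i|]$, where the second term is exactly the mean variation over $\Delta t$. Taking the supremum over subdivisions turns this term into $V_T(Z)$, so the limiting supermartingale obeys $\e[W_0]\le V_T(Z)+\e[|Z_T|]$.

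It then remains to apply the maximal inequality: for the right-continuous nonnegative supermartingale $W$ and any $h>0$, $h\,\p(\sup_{t\in[0,T]}W_t>h)\le\e[W_0]$. Since $|Z_t|\le W_t$ forces $\{\sup_t|Z_t|>h\}\subseteq\{\sup_tW_t>h\}$, chaining the two estimates gives $h\,\p(\sup_t|Z_t|>h)\le\e[W_0]\le V_T(Z)+\e[|Z_T|]$, which is the assertion of Lemma~\ref{quasiprop}.

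I expect the genuine obstacle to lie in the passage from the finite-subdivision objects $Y^{\Delta t},U^{\Delta t}$ to honest continuous-time supermartingales $Y,U$: one must check that the tail sums of the positive and negative parts converge, as the partition is refined, to well-defined right-continuous supermartingales, and that the initial expectation passes to the limit with the stated inequality. This convergence and regularity is precisely what Rao's theorem packages, so in the write-up I would invoke that result for these points and carry out in detail only the variation bookkeeping that produces the bound $V_T(Z)+\e[|Z_T|]$.
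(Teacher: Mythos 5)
Your argument is correct and follows exactly the route the paper attributes to Kurtz: Rao's decomposition $Z=Y-U$ into nonnegative right-continuous supermartingales, the domination $|Z|\le W:=Y+U$, the bound $\e[W_0]\le V_T(Z)+\e[|Z_T|]$ from the partition-level bookkeeping, and the weak-type maximal inequality for nonnegative supermartingales. The paper does not reproduce a proof but cites Kurtz's Lemma 5.3 and Rao's theorem (Protter, Section III, Theorem 17) for precisely these points, so your write-up is a faithful expansion of the intended argument.
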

\subsection{Proof of $L^p$-convergence of $X^{(n)}$}\label{ProofofLpconv}
In this section, we prove several lemmas used in proving Lemma \ref{DthetaXt} and satisfying the assumptions of Lemma \ref{SoXieThm32}.
The key point in this proof is to use the fact that $\{X_t^{(\ast,n)}\}_{t\ge 0}$ converges towards the law of $\{X_t\}_{t\ge 0}$ in $L^p$, where $p\in(1,\beta)$.\\
To that purpose, we show the following two statements.
\begin{Lem}\label{XnX_Lp_conv}
\begin{itemize}
    \item [(i)] $\sup_{t\in[0,T]}|X_t^{(n)}-X_t|^p\to 0$ in probability as $n\to\infty$.
    \item [(ii)] The class of random variable
    \begin{align*}
    \left\{\sup_{s\in[0,t]}\left|X_s^{(n)}-X_s\right|^p\right\}_{t\in[0,T]}
    \end{align*}
    is uniformly integrable.
\end{itemize}    
\end{Lem}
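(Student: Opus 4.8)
The plan is to deduce the $L^p$-convergence of $\sup_{s\in[0,T]}|X_s^{(n)}-X_s|$ from two independent ingredients: convergence in probability, item (i), obtained through Komatsu's quasi-martingale technique together with Kurtz's maximal inequality (Lemma \ref{quasiprop}); and uniform integrability, item (ii), obtained from a uniform-in-$n$ bound on a slightly higher moment. Throughout I write $Z_t^{(n)}:=X_t^{(n)}-X_t$. Subtracting the two L\'evy--It\^o representations (the Brownian parts and the jumps of size $|z|<n$ cancel, since $X$ and $X^{(n)}$ are driven by the same $W$ and $N$) gives the key identity
\[
Z_t^{(n)}=\int_0^t\bigl(b(X_s^{(n)})-b(X_s)\bigr)\,\rd s-\sigma_2\int_0^t\int_{|z|\ge n}z\,N(\rd s,\rd z),
\]
so $Z^{(n)}$ is the sum of an absolutely continuous drift and a \emph{finite-activity} pure-jump process (note $\nu(\{|z|\ge n\})<\infty$ by \eqref{ass_nu1}), with no Brownian component and no singular small-jump part.

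For (i) I would fix $r\in(0,1]$ and apply It\^o's formula to $\urde(Z_t^{(n)})$, where $\urde$ is the smooth approximation of $|x|^r$ from Lemma \ref{abstruct}. Because $Z^{(n)}$ is finite variation, no second-order term appears, and $\urde(Z_t^{(n)})$ decomposes as $\urde(0)+\int_0^t G_s^{(n)}\rd s+(\text{martingale})$, with predictable drift
\[
G_s^{(n)}=\urde'(Z_{s-}^{(n)})\bigl(b(X_s^{(n)})-b(X_s)\bigr)+\int_{|z|\ge n}\bigl(\urde(Z_{s-}^{(n)}-\sigma_2 z)-\urde(Z_{s-}^{(n)})\bigr)\nu(\rd z).
\]
The two pieces are controlled by the structure of $\urde$: from $|x\,u_r'(x)|=r|x|^r$ and the comparisons \eqref{ude_ine1}--\eqref{ude_ine2} one gets $|\urde'(x)\,x|\le r\,\urde(x)+C\eps^r$, which bounds the first piece by $\|b'\|_\infty\bigl(r\,\urde(Z_s^{(n)})+C\eps^r\bigr)$; and the $r$-H\"older continuity of $\urde$ (inherited from $|x|^r$, $r\le1$) bounds the second piece by $\sigma_2^r\int_{|z|\ge n}|z|^r\,\nu(\rd z)=:\sigma_2^r a_n$, where $a_n\to0$ since $|z|^r\le|z|^p$ on $\{|z|\ge n\ge1\}$ and \eqref{ass_nu1}. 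Taking expectations and using Gronwall's inequality yields $\e[\urde(Z_t^{(n)})]\le C(\eps^r+a_n)$ uniformly on $[0,T]$, and the mean variation $V_T(\urde(Z^{(n)}))\le\e[\int_0^T|G_s^{(n)}|\rd s]$ obeys the same bound. Kurtz's inequality (Lemma \ref{quasiprop}) then gives $h\,\p(\sup_{t\le T}\urde(Z_t^{(n)})>h)\le C(\eps^r+a_n)$; combining this with \eqref{ude_ine1} to pass from $\urde$ back to $|\cdot|^r$, taking $\limsup_n$ (so that $a_n\to0$) and then $\eps\to0$ shows $\sup_{t\le T}|Z_t^{(n)}|\to0$ in probability, whence $\sup_{t\le T}|Z_t^{(n)}|^p\to0$ in probability.

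For (ii) I would produce a uniform higher-moment bound: fixing $p'\in(p,\beta)$, I would show $\sup_n\e[\sup_{t\le T}|X_t^{(n)}|^{p'}]<\infty$ and $\e[\sup_{t\le T}|X_t|^{p'}]<\infty$, so that $\sup_n\e[\sup_{t\le T}|Z_t^{(n)}|^{p'}]<\infty$; since $p'/p>1$, de la Vall\'ee-Poussin's criterion then yields the uniform integrability of $\{\sup_{s\le t}|X_s^{(n)}-X_s|^p\}$. The moment bound comes from the L\'evy--It\^o representation of $X^{(n)}$ after splitting the jumps at scale $1$. The drift $b$ has linear growth uniformly in $n$ (the compensator correction $\int_{1\le|z|<n}z\,\nu(\rd z)$ is bounded by \eqref{ass_nu2}), and the Brownian term is harmless. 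For $|z|<1$ one uses Burkholder--Davis--Gundy and $\e[[M]_T^{p'/2}]\le(\int_0^T\!\int_{|z|<1}|z|^2\nu\,\rd s)^{p'/2}<\infty$, which is independent of $n$. For $|z|\ge1$ one compensates (the drift is again controlled by \eqref{ass_nu2}) and uses, for $p'\in(1,2]$, the bound $\e[\sup_{t\le T}|\int_0^t\!\int f\,\nt|^{p'}]\le C_{p'}\e[\int_0^T\!\int|f|^{p'}\nu\,\rd s]$ (BDG with $(\sum a_i)^{p'/2}\le\sum a_i^{p'/2}$), giving $C_{p'}T\int_{|z|\ge1}|z|^{p'}\nu(\rd z)<\infty$ uniformly in $n$ by \eqref{ass_nu1}. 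A final Gronwall argument closes the bound uniformly in $n$, and the same estimates with $n=\infty$ handle $X$.

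The main obstacle is the low-moment regime $p\in(1,\beta)$ with $\beta$ possibly below $2$ (as in the stable example). There a single scale-uniform $L^2$ (Kunita-type) estimate is unavailable: $\int_{|z|<1}|z|^{p'}\nu=\infty$ while $\int_{1\le|z|<n}|z|^2\nu$ diverges as $n\to\infty$, so neither a pure $L^{p'}$-of-jumps bound nor a pure $L^2$-quadratic-variation bound works across all jump sizes. The resolution threaded through both parts is to split the L\'evy measure at $|z|=1$ and to replace sharp $L^p$ martingale moment inequalities by Komatsu's sub-linear ($r<1$) test functions together with the quasi-martingale maximal inequality, which only ever require the finite quantities $\int_{|z|\ge n}|z|^r\nu$ and $\int_{|z|\ge1}|z|^{p'}\nu$ furnished by \eqref{ass_nu1}--\eqref{ass_nu2}.
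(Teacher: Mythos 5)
Your proposal is correct and rests on the same two pillars as the paper's proof --- Komatsu's mollified test functions $\urde$ from Lemma \ref{abstruct} together with Kurtz's quasi-martingale maximal inequality (Lemma \ref{quasiprop}) for part (i), and a uniform higher moment plus de la Vall\'ee-Poussin for part (ii) --- but it executes both parts along a somewhat different route. For (i), you apply It\^o's formula to $\urde(X^{(n)}_t-X_t)$ directly and carry the drift term $b(X^{(n)})-b(X)$ through the It\^o/Gronwall loop, which forces you to prove the extra first-derivative estimate $|x\,\urde'(x)|\le r\,\urde(x)+C\eps^r$ (this does hold, with a constant depending on $\delta$, but it is an additional property of $\urde$ not recorded in Lemma \ref{abstruct}); the paper instead first runs Gronwall on the pathwise inequality to obtain \eqref{XnXLnL}, i.e.\ $\sup_t|X^{(n)}_t-X_t|\le C\sigma_2 e^{CT}\sup_t|L^{(n)}_t-L_t|$, and only then applies the Komatsu/Kurtz machinery to the compound-Poisson difference $L^{(n)}_t-L_t=-\int_0^t\int_{|z|\ge n}z\,N(\rd s,\rd z)$, whose It\^o decomposition has no drift coupling. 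For (ii), you derive uniform-in-$n$ bounds on $\e[\sup_t|X^{(n)}_t|^{p'}]$ and $\e[\sup_t|X_t|^{p'}]$ from scratch via BDG-type inequalities after splitting the jumps at $|z|=1$; the paper again exploits the reduction \eqref{XnXLnL} and the fact that $L^{(n)}-L$ is itself a L\'evy process, so that Sato's Theorem 25.18 on submultiplicative moment functions yields $\sup_n\e\bigl[\sup_t|L^{(n)}_t-L_t|^{pq}\vee 1\bigr]<\infty$ directly from $\int_{|z|\ge 1}|z|^{pq}\nu(\rd z)<\infty$. The paper's reduction is shorter and lets the L\'evy-process structure do the work; your direct version is more self-contained and would survive a state-dependent jump coefficient, at the price of the extra derivative estimate and the explicit moment bounds. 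One small imprecision: in bounding the compensator term you should keep the contribution $2\eps^r\nu(\{|z|\ge n\})\le 2\eps^r\nu(\{|z|\ge 1\})$ rather than dropping it; it is harmless since it vanishes when you send $\eps\to 0$ after $n\to\infty$, exactly as in the paper's final estimate.
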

\begin{proof}
    (i):
    We write $r=\frac{p}{\beta}$ for clarity.
By using the triangle inequality and Jensen's inequality, we have
\begin{align*}
    \left|X_t^{(n)}-X_t\right| 
    &\le \int_0^t \left|b\left(X_{s}^{(n)}\right)-b\left(X_{s}\right)\right|\rd s + \sigma_2 \left|L_t^{(n)}-L_t\right|.
\end{align*}
By the definition of supremum, we have
\begin{align*}
    \sup_{t\in[0,T]}\left|X_t^{(n)}-X_t\right|
    &\le T \sup_{t\in[0,T]}\left|b\left(X_{t}^{(n)}\right)-b\left(X_{t}\right)\right| + \sigma_2 \sup_{t\in[0,T]}\left|L_t^{(n)}-L_t\right|.
\end{align*}
Since $b$ is Lipschitz continuous, we have
\begin{align*}
    \sup_{t\in[0,T]}\left|X_t^{(n)}-X_t\right|
    &\le  CT\sup_{t\in[0,T]}\left|X_{t}^{(n)}-X_{t}\right|\rd s + \sigma_2 \sup_{t\in[0,T]}\left|L_t^{(n)}-L_t\right|.
\end{align*}
By using Gronwall's inequality and Jensen's inequality, we have
\begin{align}\label{XnXLnL}
    \sup_{t\in[0,T]}\left|X_t^{(n)}-X_t\right|
    \le C\sigma_2 \exp(CT) \sup_{t\in[0,T]}\left|L_t^{(n)}-L_t\right|.
\end{align}
Here, by above inequality and Lemma \ref{quasiprop}, for any $h>0$
\begin{align*}
 \p\left(\sup_{t\in[0,T]}\left|X_t^{(n)}-X_t\right|^p> h\right)
    &\le  \p\left(\sup_{t\in[0,T]}\left|L_t^{(n)}-L_t\right|^{r}> \left(\frac{h}{C \sigma_2 \exp(CT)}\right)^{\frac{1}{\beta}}\right)\\
    &\le  \p\left(\sup_{t\in[0,T]}\left(\eps^p+u_{r,\delta,\varepsilon}\left(L_t^{(n)}-L_t\right) \right)>\left(\frac{h}{C \sigma_2 \exp(CT)}\right)^{\frac{1}{\beta}}\right)\\
    &\le \left(\frac{C \sigma_2 \exp(CT)}{h}\right)^{\frac{1}{\beta}} \bigg(V_T\left(\eps^p+u_{r,\delta,\varepsilon}\left(L^{(n)}-L\right)\right)\\
    &\qquad+\e\left[\left|\eps^p+u_{r,\delta,\varepsilon}\left(L_T^{(n)}-L_T\right)\right|\right]\bigg).
\end{align*}
Here, by the definition of the mean variation and \eqref{ude_ine2}, we have
\begin{align*}
V_T\left(\eps^p+u_{r,\delta,\varepsilon}\left(L^{(n)}-L\right)\right)&=V_T\left(u_{r,\delta,\varepsilon}\left(L^{(n)}-L\right)\right),\\
\e\left[\left|\eps^p+u_{r,\delta,\varepsilon}\left(L_t^{(n)}-L_t\right)\right|\right]&=\e\left[\eps^p+u_{r,\delta,\varepsilon}\left(L_T^{(n)}-L_T\right)\right].
\end{align*}
By using the L\'evy-It\^o decomposition (\cite{Ap09}, Theorem 2.4.16), we have
\begin{align*}
L_t^{(n)}-L_t
&=\int_{0}^{t}\int_{|z|\ge 1} z\1_{\{0<|z|< n\}} N(\rd s,\rd z)+\int_{0}^{t}\int_{0<|z|< 1}z\1_{\{0<|z|< n \}}\nt(\rd s,\rd z)\\
&\qquad -\int_{0}^{t}\int_{|z|\ge 1} z N(\rd s,\rd z)-\int_{0}^{t}\int_{0<|z|< 1} z\nt(\rd s,\rd z),\\
&=-\int_{0}^{t}\int_{|z|\ge 1} z\1_{\{|z|\ge n\}} N(\rd s,\rd z)-\int_{0}^{t}\int_{0<|z|< 1}z\1_{\{|z|\ge n \}}\nt(\rd s,\rd z),\\
&=-\int_{0}^{t}\int_{|z|\ge 1} z\1_{\{|z|\ge n\}} N(\rd s,\rd z).
\end{align*}
The last equal follows by $n\in\n$.
Using the It\^o's formula (\cite{Ap09}, Theorem 4.4.7) and $N(\rd t,\rd z)=\nt(\rd t,\rd z)+\nu(\rd z)\rd t$, we have
\begin{align*}
    &\urde \left(L_t^{(n)}-L_t\right)\\
    &=\int_{0}^{t}\int_{|z|\ge 1} \left\{\urde\left(L_{s-}^{(n)}-L_{s-}-z\1_{\{|z|\ge n\}}\right)-\urde\left(L_{s-}^{(n)}-L_{s-}\right)\right\} N(\rd s,\rd z)\\
    &=\int_{0}^{t}\int_{|z|\ge 1} \left\{\urde\left(L_{s-}^{(n)}-L_{s-}-z\1_{\{|z|\ge n\}}\right)-\urde\left(L_{s-}^{(n)}-L_{s-}\right)\right\} \nt(\rd s,\rd z)\\
    &\qquad +\int_{0}^{t}\int_{|z|\ge 1} \left\{\urde\left(L_{s-}^{(n)}-L_{s-}-z\1_{\{|z|\ge n\}}\right)-\urde\left(L_{s-}^{(n)}-L_{s-}\right)\right\} \rd s\nu(\rd z),\\
    &=: M_t^{\delta,\eps}+I_t^{\delta,\eps}.
\end{align*}
Here, by \eqref{ude_ine1}, for any $x,y\in\real$
\begin{align*}
    -\urde(y)
        &\le \eps^r - |y|^r,\\
    \urde(x)-\urde(y)
        &\le 2\eps^r +|x|^r- |y|^r,\\
        &\le 2\eps^r +\left| |x|^r- |y|^r \right|,\\
        &\le 2\eps^r +|x-y|^r.
\end{align*}
So we have,
\begin{align*}
    &\urde \left(L_T^{(n)}-L_T\right)\\
    &\le \int_{0}^{T}\int_{|z|\ge 1} \left\{\urde\left(L_{s-}^{(n)}-L_{s-}-z\1_{\{|z|\ge n\}}\right)-\urde\left(L_{s-}^{(n)}-L_{s-}\right)\right\} \nt(\rd s,\rd z)\\
    &\qquad +\int_{0}^{T}\int_{|z|\ge 1} \left\{2 \eps^r+|z|^r\1_{\{|z|\ge n\}}\right\} \rd s\nu(\rd z).
\end{align*}
Also,
\begin{align*}
    \int_{0}^{T}\int_{|z|\ge 1} \left\{2 \eps^r+|z|^r\1_{\{|z|\ge n\}}\right\} \rd s\nu(\rd z)
    \le 2 C T \eps^r+T \int_{\real\setminus\{0\}} |z|^r \1_{\{|z|\ge n\}} \nu(\rd z).
\end{align*}
We can evaluate
\begin{align*}
    &V_T\left(u_{r,\delta,\varepsilon}\left(L^{(n)}-L\right)\right)\\
    &=\sup_{\Delta t}\e\left[\sum_{i=0}^n \left|\e\left[ u_{r,\delta,\varepsilon}\left(L^{(n)}_{t_{i}}-L_{t_{i}}\right)-u_{r,\delta,\varepsilon}\left(L^{(n)}_{t_{i+1}}-L_{t_{i+1}}\right)\mid \mathcal{F}_{t_i} \right]\right|\right]\\
    &=\sup_{\Delta t}\e\left[\sum_{i=0}^n \left|\e\left[ M_{t_i}^{\delta,\eps}+I_{t_i}^{\delta,\eps}-M_{t_{i+1}}^{\delta,\eps}-I_{t_{i+1}}^{\delta,\eps}\mid \mathcal{F}_{t_i} \right]\right|\right]\\
    &=\sup_{\Delta t}\e\left[\sum_{i=0}^n \left|\e\left[ I_{t_i}^{\delta,\eps}-I_{t_{i+1}}^{\delta,\eps}\mid \mathcal{F}_{t_i} \right]\right|\right].
\end{align*}
Since $(M_t^{\delta,\eps})_{t\in[0,T]}$ is a martingale, the last equation follows.
By using Jensen's inequality, we have
\begin{align*}
    &V_T\left(u_{r,\delta,\varepsilon}\left(L^{(n)}-L\right)\right)\\
    &\le \sup_{\Delta t}\e\left[\sum_{i=0}^n \e\left[ \left|I_{t_i}^{\delta,\eps}-I_{t_{i+1}}^{\delta,\eps}\right|\mid \mathcal{F}_{t_i} \right]\right]\\
    &= \sup_{\Delta t}\sum_{i=0}^n \e\left[ \left|\int_{t_i}^{t_{i+1}} \int_{|z|\ge 1} \left\{\urde\left(L_{s-}^{(n)}-L_{s-}-z\1_{\{|z|\ge n\}}\right)-\urde\left(L_{s-}^{(n)}-L_{s-}\right)\right\} \rd s\nu(\rd z)\right| \right]\\
    &\le \sup_{\Delta t}\sum_{i=0}^n \e\left[ \int_{t_i}^{t_{i+1}} \int_{|z|\ge 1} \left|\urde\left(L_{s-}^{(n)}-L_{s-}-z\1_{\{|z|\ge n\}}\right)-\urde\left(L_{s-}^{(n)}-L_{s-}\right)\right| \rd s\nu(\rd z) \right]\\
    &\le \sup_{\Delta t}\sum_{i=0}^n \e\left[ \int_{t_i}^{t_{i+1}} \int_{|z|\ge 1} \left(2 \eps^r+|z|^r\1_{\{|z|\ge n\}}\right) \rd s\nu(\rd z) \right]\\
    &\le \int_{0}^{T} \int_{|z|\ge 1} \left(2 \eps^r+|z|^r\1_{\{|z|\ge n\}}\right) \rd s\nu(\rd z).
\end{align*}
Therefore, we have
\begin{align*}
     &\p\left(\sup_{t\in[0,T]}\left|X_t^{(n)}-X_t\right|^p> h\right)\\
     &\le \left(\frac{C \sigma_2 \exp(CT)}{h}\right)^{\frac{1}{\beta}} \left\{(4 C T+1)\eps^r+2T \int_{\real\setminus\{0\}} |z|^r \1_{\{|z|\ge n\}} \nu(\rd z)\right\}.
\end{align*}
Since the above inequality holds for any $\eps>0$, we obtain
\begin{align*}
    \p\left(\sup_{t\in[0,T]}\left|X_t^{(n)}-X_t\right|^p> h\right)\le 2T\left(\frac{C \sigma_2 \exp(CT)}{h}\right)^{\frac{1}{\beta}} \int_{\real\setminus\{0\}} |z|^r \1_{\{|z|\ge n\}} \nu(\rd z).
\end{align*}
By the assumption, for any $h>0$, we have 
\begin{align*}
    \lim_{n\to \infty} \p\left(\sup_{t\in[0,T]}\left|X_t^{(n)}-X_t\right|^p> h\right)=0.
\end{align*}
Here,
\begin{align*}
    \left|\sup_{t\in[0,T]}X_t^{(n)}-\sup_{t\in[0,T]}X_t\right|
    \le \sup_{t\in[0,T]}\left|X_t^{(n)}-X_t\right|.
\end{align*}
Therefore, we have
\begin{align*}
    \lim_{n\to \infty} \p\left(\left|\sup_{t\in[0,T]}X_t^{(n)}-\sup_{t\in[0,T]}X_t\right|^p> h\right)=0.
\end{align*}
(ii): Next, we show that a following process
\begin{align*}
    \left\{\sup_{t\in [0,T]} \left|X_t^{(n)}-X_t\right|^p \right\}_{T\ge 0}
\end{align*}
is uniform integrability.
 To show that, by using inequality \eqref{XnXLnL} it suffices to show for some $q>1$
 \begin{align*}
     \e\left[\left(\sup_{t\in[0,T]}\left|L_t^{(n)}-L_t\right|^p\vee 1\right)^q\right]<\infty.
 \end{align*}
By assumption of \eqref{ass_nu1} and the denseness in real numbers, we can set $q>1$ such that $pq<\beta$ so that for each $n\in\n$.
\begin{align}\label{pq_nu_ine}
    \int_{|z|\ge n} |z|^{pq}\nu(\rd z)<\infty.
\end{align}
Here, we set $g(x)=|x|^{pq} \vee 1$, then $g$ is a nonnegative increasing submultiplicative function and $\lim_{x\to\infty} g(x)=\infty$.
Since Theorem 25.18 in \cite{Sa99}, we should show the following:
\begin{align*}
    \e\left[g\left(\left|L_t^{(n)}-L_t\right|\right)\right]<\infty\quad \textrm{~~for~~some~~}t>0.
\end{align*}
For some $t>0$,
\begin{align*}
    \e\left[g\left(\left|L_t^{(n)}-L_t\right|\right)\right]
    &=\e\left[\1_{\left\{\left|L_t^{(n)}-L_t\right|\le 1\right\}}\right]+
    \e\left[\left|L_t^{(n)}-L_t\right|^{pq} \1_{\left\{\left|L_t^{(n)}-L_t\right|>1\right\}}\right]\\
    &\le 1+\e\left[\left|L_t^{(n)}-L_t\right|^{pq}\right]\\
    &<\infty,
\end{align*}
The last inequality does not depend on $n\in\n$ because of \eqref{pq_nu_ine} and Example 25.10 in \cite{Sa99}.
\end{proof}
\begin{Lem}\label{lem_CnC_Lp_conv}
    We show 
    \begin{align*}
        \lim_{n\to\infty} \e\left[\sup_{t\in[0,T]}\left|C_t^{(n)}-C_t\right|^p\right]=0.
    \end{align*}
\end{Lem}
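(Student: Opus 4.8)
The plan is to exploit the cancellation of the Brownian drift terms: since $C_t$ and $C_t^{(n)}$ share the term $\sigma_1\int_0^t h(s)\,\rd s$, we have
\[
C_t^{(n)}-C_t=-\sigma_2\int_0^t\int_{|z|\ge n}\mathbf{v}(s,z)\,N(\rd s,\rd z),
\]
so the whole problem reduces to controlling, in $L^p(\Omega)$ uniformly in $t$, the jump integral of $\mathbf v$ over the large jumps $\{|z|\ge n\}$. Since $n\ge 1$, the set $\{|z|\ge n\}$ carries only finitely many jumps on $[0,T]$ (as $\nu(\{|z|\ge 1\})<\infty$ by \eqref{ass_nu1}), so this object is an honest finite sum and is unambiguously defined, even though the full small-jump integral appearing in $C_t$ is only meaningful through compensation.

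I would then decompose $N=\nt+\nu\,\rd s$, writing $\int_0^t\int_{|z|\ge n}\mathbf v\,N = M_t^{(n)}+R_t^{(n)}$ with $M_t^{(n)}=\int_0^t\int_{|z|\ge n}\mathbf v\,\nt$ and $R_t^{(n)}=\int_0^t\int_{|z|\ge n}\mathbf v\,\nu(\rd z)\rd s$, and bound $\e[\sup_{t\le T}|C_t^{(n)}-C_t|^p]$ by a constant times $\e[\sup_{t}|M_t^{(n)}|^p]+\e[\sup_t|R_t^{(n)}|^p]$ using $|a+b|^p\le 2^{p-1}(|a|^p+|b|^p)$. For the martingale part I would apply Kunita's first inequality (\cite{Ap09}, Theorem 4.4.23): for $1<p\le 2$ this gives $\e[\sup_{t\le T}|M_t^{(n)}|^p]\le C_p\,\e[\int_0^T\int_{|z|\ge n}|\mathbf v|^p\,\nu(\rd z)\rd s]$, and for $p>2$ (which can only occur when $\beta>2$) the additional term $\e[(\int_0^T\int_{|z|\ge n}|\mathbf v|^2\nu(\rd z)\rd s)^{p/2}]$ is treated identically. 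Taking $q=p$ (resp.\ $q=2$) in the hypotheses of Lemma \ref{DthetaXt}, the relevant integrand is integrable on $\Omega\times[0,T]\times\{|z|\ge 1\}$, and since $\{|z|\ge n\}\downarrow\emptyset$, dominated convergence forces these tails to vanish as $n\to\infty$.

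The main obstacle is the drift (compensator) term $R_t^{(n)}$, because $\nu$ is only assumed to integrate $1\wedge|z|^2$, so $\int_{|z|\ge n}|\mathbf v|\,\nu$ is not controlled by either of the fractional conditions alone; this is precisely where both exponents in the hypotheses of Lemma \ref{DthetaXt} are needed. Bounding $\sup_{t\le T}|R_t^{(n)}|\le \int_0^T\int_{|z|\ge n}|\mathbf v|\,\nu(\rd z)\rd s$, I note that $\{|z|\ge n\}\times[0,T]$ has finite $\nu\otimes\mathrm{Leb}$-measure and that $q/\beta<1<q$ for every $q\in(1,\beta)$, so $1$ lies strictly between the two available exponents. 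Lyapunov's interpolation inequality then gives
\[
\int_0^T\!\!\int_{|z|\ge n}\!|\mathbf v|\,\nu(\rd z)\rd s
\le\Big(\int_0^T\!\!\int_{|z|\ge n}\!|\mathbf v|^{q/\beta}\nu(\rd z)\rd s\Big)^{a}\Big(\int_0^T\!\!\int_{|z|\ge n}\!|\mathbf v|^{q}\nu(\rd z)\rd s\Big)^{b}
\]
for suitable exponents $a,b>0$. By the first hypothesis of Lemma \ref{DthetaXt} the first factor tends to $0$, and by the second it stays bounded, so the right-hand side vanishes; raising to the $p$-th power and invoking dominated convergence (with majorant integrable by the same hypotheses) passes this to the expectation and yields $\e[\sup_t|R_t^{(n)}|^p]\to 0$.

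Combining the martingale and drift estimates gives \eqref{CnC_Lp_conv}. The only genuinely delicate points are the matching of the fractional integrability exponents $q/\beta$ and $q$ to the exponents demanded by Kunita's inequality and by the interpolation step, and the care needed to keep the large-jump integral well defined despite the non-integrability of the small jumps; the remaining estimates are routine.
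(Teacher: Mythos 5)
Your proof is correct, but it takes a genuinely different route from the paper's. The paper disposes of this lemma by declaring it provable ``in the same way as Lemma \ref{XnX_Lp_conv}'', i.e.\ by rerunning the Komatsu-type argument: smooth $|x|^r$ into $\urde$, estimate the mean variation of the resulting quasi-martingale, apply Kurtz's maximal inequality (Lemma \ref{quasiprop}) to obtain convergence in probability, and then upgrade to $L^p$ through a separate uniform-integrability step. You instead observe that $C_t^{(n)}-C_t=-\sigma_2\int_0^t\int_{|z|\ge n}\mathbf{v}(s,z)\,N(\rd s,\rd z)$ is a pure large-jump integral, split it into martingale plus compensator, and kill each piece by a moment inequality and dominated convergence over the shrinking sets $\{|z|\ge n\}\downarrow\emptyset$. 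This is more elementary and more transparent: the quasi-martingale machinery is really only needed for $X^{(n)}-X$, where the integrand is $z$ itself and only tail moments of order below $\beta$ are available, whereas here $\mathbf{v}\in\mathbb{V}_{\infty-}$ carries all the integrability one could want. Two small repairs: Kunita's first inequality in \cite{Ap09} is stated for $p\ge 2$, so for $p\in(1,2)$ (the typical case, e.g.\ when $\beta<2$) you should instead invoke the Burkholder--Davis--Gundy inequality together with $\bigl(\sum_i a_i\bigr)^{p/2}\le\sum_i a_i^{p/2}$ to reach the bound $\e\bigl[\sup_{t\le T}|M_t^{(n)}|^p\bigr]\le C_p\,\e\bigl[\int_0^T\int_{|z|\ge n}|\mathbf{v}|^p\,\nu(\rd z)\rd s\bigr]$; and the Lyapunov interpolation for the compensator is more elaborate than necessary, since $\|\mathbf{v}\varrho\|_{\mathbb{L}_p^1}<\infty$ with $\varrho\ge 1$ already gives $\e\bigl[\bigl(\int_0^T\int_{\Gamma_0}|\mathbf{v}|\,\nu(\rd z)\rd s\bigr)^p\bigr]<\infty$, after which monotone plus dominated convergence on $\{|z|\ge n\}$ finishes the drift term directly. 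Neither point affects the validity of your argument.
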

\begin{proof}
    It can be proved in the same way as in Lemma \ref{XnX_Lp_conv}.    
\end{proof}
\end{appendices}

\end{document}